\documentclass[11pt,leqno,twoside]{amsart}
\usepackage{amssymb,amsmath,amsthm,soul,color}
\usepackage{t1enc}
\usepackage[cp1250]{inputenc}
\usepackage{a4,indentfirst,latexsym}
\usepackage{graphics}
\usepackage{mathrsfs}
\usepackage{cite,enumitem,graphicx}
\usepackage[colorlinks=true,urlcolor=blue,
citecolor=red,linkcolor=blue,linktocpage,pdfpagelabels,
bookmarksnumbered,bookmarksopen]{hyperref}
\usepackage[english]{babel}
\usepackage[left=2.61cm,right=2.61cm,top=2.72cm,bottom=2.72cm]{geometry}
\usepackage[hyperpageref]{backref}
\usepackage[colorinlistoftodos]{todonotes}

%
%
%
%

\makeatletter
\providecommand\@dotsep{5}
\def\listtodoname{List of Todos}
\def\listoftodos{\@starttoc{tdo}\listtodoname}
\makeatother

\numberwithin{equation}{section}
\newtheorem{Th}{Theorem}[section]
\newtheorem{Prop}[Th]{Proposition}
\newtheorem{Lem}[Th]{Lemma}
\newtheorem{Cor}[Th]{Corollary}

\newtheorem{Rem}[Th]{Remark}

   \newcommand{\vp}{\varphi}
   
   \newcommand{\eps}{\varepsilon}

   \def\R{\mathbb{R}}


   \def\J{\mathcal{J}}

   \def\iO{\int_{\Omega}}
   \def\RN{\mathbb{R}^N}
   \def\n{\nabla}

\title[Ground states for a critical system]{Positive ground states for a system of Schr\"odinger equations with critically growing nonlinearities}

\author[P. d'Avenia]{Pietro d'Avenia}
\author[J. Mederski]{Jaros\l aw Mederski}

\address[P. d'Avenia]{\newline\indent
Dipartimento di Meccanica, Matematica e Management
\newline\indent 
Politecnico di Bari
\newline\indent
Via Orabona 4,  70125  Bari, Italy}
\email{\href{mailto:pietro.davenia@poliba.it}{pietro.davenia@poliba.it}}

\address[J. Mederski]{\newline\indent 
Faculty of Mathematics and Computer Science
\newline\indent 
Nicolaus Copernicus University
\newline\indent
ul. Chopina 12/18, 87-100 Toru\'n}
\email{\href{mailto:jmederski@mat.umk.pl}{jmederski@mat.umk.pl}}

\thanks{The first author has been supported by Gruppo Nazionale per l’Analisi Matematica, la Probabilit\`a e le loro Applicazioni (GNAMPA) of Istituto Nazionale di Alta Matematica (INdAM)}
\subjclass[2010]{35J57, 35A01, 35B33, 35J50.}
\date{\today}
\keywords{Elliptic systems, critical exponent, ground states.}

\begin{document}
\begin{abstract}
We study the following problem
\[
\begin{cases}
-\Delta u = \lambda u + u^{2^*-2} v
&
\hbox{in } \Omega,\\
-\Delta v=  \mu v^{2^*-1} + u^{2^*-1} 
&
\hbox{in } \Omega,\\
u> 0,v> 0
&
\hbox{in } \Omega,\\
u=v=0
&
\hbox{on } \partial \Omega,
\end{cases}
\]
where $\Omega$ is a bounded domain of $\R^N$, $N\geq 4$, $2^*=2N/(N-2)$, $\lambda\in\R$ and $\mu\geq 0$ and we obtain existence and nonexistence results, depending on the value of the parameters $\lambda$ and $\mu$.
\end{abstract}

\maketitle



\section{Introduction}

In the last years, nonlinear elliptic systems have been intensively studied by many authors and results, also for semiclassical states and in the singularly perturbed settings, have been obtained (see, for instance,  \cite{ACR,AC,BDW,BWW,BL,DWW,IT,LW,MMP,MPS,NTTV,P,PS,Sirakov,WW}
and references therein). This kind of systems appears if we look for solitary waves of suitable time-dependent nonlinear Schr\"odinger systems which arise in many physical problems, especially in nonlinear optics (see e.g. \cite{AA}) and in Hartree-Fock theory (see e.g. \cite{EGBB}).

%

In this paper we deal with the problem
\begin{equation}
\label{eq}
\tag{$\mathcal{P}$}
\begin{cases}
-\Delta u = \lambda u + u^{2^*-2} v
&
\hbox{in } \Omega,\\
-\Delta v=  \mu v^{2^*-1} + u^{2^*-1} 
&
\hbox{in } \Omega,\\
u> 0,v> 0
&
\hbox{in } \Omega,\\
u=v=0
&
\hbox{on } \partial \Omega,
\end{cases}
\end{equation}
where $\Omega$ is a bounded domain of $\R^N$, $N\geq 4$, $2^*=2N/(N-2)$, $\lambda\in\R$ and $\mu\geq 0$. 

If $\mu=0$, problem \eqref{eq} is an $N$-dimensional variant of the critical problem studied in  \cite{AD}, 
where the authors, following the {\em classical} approach in the Schr\"odinger-Poisson or in the Klein-Gordon-Maxwell systems (see \cite{AD} and references therein), use the so-called {\em reduction method}, namely, the second equation has a unique solution for a given $u$ and it is possible to put it in the first equation, reducing the system to a single {\em nonlocal} equation. In \cite{AD}, the energy functional has the Mountain Pass geometry and the classical approach due to Brezis-Nirenberg \cite{BN} can be adopted.

However, if $\mu>0$, the reduction argument can be no longer applied since the map $H^1_0(\Omega)\ni u\mapsto v_{u}\in H^1_0(\Omega)$, where $v_u$ is a solution to the problem 
\begin{equation}\label{eq2}
\begin{cases}
-\Delta v = \mu |v|^{2^*-2}v +  |u|^{2^*-1} &
    \hbox{in } \Omega,\\
    u=0
    &
    \hbox{on } \partial \Omega,
\end{cases}
\end{equation}
is not necessarily well-defined.
Recall, indeed, that if $u\neq 0$ and $\mu>0$, then \eqref{eq2} may have at least two solutions (see \cite{Tarantello}) or 
no solution (see \cite{CR,M,Z}). 

We look for solutions of \eqref{eq} as critical points of the $C^1$-functional $\J:H^1_0(\Omega)\times H^1_0(\Omega)\to\R$ given by
$$\J(u,v)=\frac{1}{2}\int_{\Omega} |\nabla u |^2 
-\frac{\lambda}{2}\int_{\Omega}|u|^2
+\frac{1}{2(2^*-1)}\int_{\Omega}|\nabla v|^2
-\frac{\mu}{2^*(2^*-1)}\int_{\Omega}| v|^{2^*}
-\frac{1}{2^*-1}\int_{\Omega}  |u|^{2^*-1}v.$$

We are interested in {\em nontrivial} solutions of \eqref{eq}, namely solutions $(u,v)\in H^1_0(\Omega)\times H^1_0(\Omega)$ with both $u\not\equiv 0$ and $v\not\equiv 0$.
Actually, in this kind of system, one can consider also the so-called {\em semi-trivial} solutions, i.e.  solutions $(u,0)$ with $u\not\equiv 0$ or $(0,v)$ with $v\not\equiv 0$. We observe that, for our problem \eqref{eq}, in the first case the second equation of \eqref{eq} implies that $u\equiv 0$, while, in the second case, our system \eqref{eq} reduces to the well-known equation
\begin{equation}\label{EqBN}
-\Delta v = \mu v^{2^*-1}, \quad \mu>0
\end{equation}
and the existence of solutions to \eqref{EqBN}
depends on the topology of $\Omega$ (see \cite{BahriCoron,BN}).

In particular, we are interested in positive {\em ground states} of \eqref{eq}, namely solutions that minimize $J$ on the
Nehari manifold
\begin{equation}
\label{eq:defN}
\mathcal{N}:=
\left\{(u,v)\in (H^1_0(\Omega)\times H^1_0(\Omega))\setminus\{(0,0)\}
\;\vert\;
\mathbf{G}(u,v)=(0,0)
\right\},
\end{equation}
where
\[
\mathbf{G}(u,v)=\left(
\|\nabla u\|_2^2-\lambda\|u\|_2^2- \int_{\Omega}  |u|^{2^*-1}v
,
\|\nabla v\|_2^2 - \mu\| v\|_{2^*}^{2^*}
 - \int_{\Omega}  |u|^{2^*-1}v
\right)
\]
and
$\|\cdot\|_p$ stands for the standard norm in $L^p(\Omega)$.

Let
\begin{equation*}
\mathbb{I}_N=
\begin{cases}
[0,\sqrt{6}/9] & \hbox{if } N=4,\\
[0,\mu^*] & \hbox{if } N=5,\\
[0,1] & \hbox{if } N=6,\\
[0,+\infty[ & \hbox{if } N\geq 7,
\end{cases}
\end{equation*}
where $\mu^*>0$ is defined in Theorem \ref{5lim}. Our principal aim is to prove the following result.
\begin{Th}\label{MainTh}
If 
$\mu\in\mathbb{I}_N$ and $\lambda\in(0,\lambda_1(\Omega))$, then
problem \eqref{eq} has a ground state solution.
\end{Th}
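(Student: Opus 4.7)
\emph{Proof plan.} I would realize a ground state as a minimizer of $\mathcal{J}$ on the Nehari set $\mathcal{N}$, and recover the compactness missing at critical growth through a Brezis--Nirenberg type energy estimate.

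First I would analyze $\mathcal{N}$. For $u,v\in H^1_0(\Omega)$ with $u,v\ge 0$ and $u,v\not\equiv 0$, a direct computation shows that the fibering map $(s,t)\mapsto \mathcal{J}(su,tv)$ on $(0,\infty)^2$ admits a unique critical point $(\bar s,\bar t)$, which is its strict global maximum and produces a pair $(\bar s\, u,\bar t\, v)\in\mathcal{N}$. Combined with the fact that $\lambda<\lambda_1(\Omega)$ makes $\|\nabla\cdot\|_2^2-\lambda\|\cdot\|_2^2$ equivalent to the gradient norm, this yields that $\mathcal{N}$ is nonempty and bounded away from $(0,0)$; a standard argument then shows $\mathcal{N}$ is a $C^1$-manifold on which $\mathcal{J}$ is a natural constraint, so that constrained critical points are free critical points of $\mathcal{J}$.

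Combining the two Nehari relations one obtains the identity
\[
\mathcal{J}(u,v)=\frac{2^*-2}{2}\left(\frac{1}{2^*-1}\|\nabla v\|_2^2-\frac{\mu}{2^*}\|v\|_{2^*}^{2^*}\right) \quad \text{for every }(u,v)\in\mathcal{N}.
\]
Since on $\mathcal{N}$ one also has $\|\nabla v\|_2^2\ge\mu\|v\|_{2^*}^{2^*}$, a one-line estimate gives that this expression is strictly positive, so $c:=\inf_\mathcal{N}\mathcal{J}>0$. Applying Ekeland's variational principle to a minimizing sequence and exploiting the natural constraint property I would produce a Palais--Smale sequence $(u_n,v_n)\subset\mathcal{N}$ at level $c$; the displayed identity together with the Nehari relations yields boundedness of $(u_n,v_n)$ in $H^1_0(\Omega)\times H^1_0(\Omega)$, so along a subsequence $(u_n,v_n)\rightharpoonup(u_0,v_0)$, which is a weak solution of \eqref{eq}.

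The decisive step is the strict inequality $c<c^*$, where $c^*$ denotes the first level at which the Palais--Smale condition fails for $\mathcal{J}$ (a system analogue of the Brezis--Nirenberg threshold $\tfrac{1}{N}S^{N/2}$). I would test $\mathcal{J}$ along a family built from Talenti instantons $U_{\varepsilon,y}$ cut off inside $\Omega$, coupled with a suitable profile for the second component, and then projected onto $\mathcal{N}$. The critical-growth expansion produces exactly the $c^*$ contribution, while the term $-\tfrac{\lambda}{2}\|u\|_2^2$ (negative since $\lambda>0$) supplies the required energy gain. This is precisely where the dimension $N$ and the condition $\mu\in\mathbb{I}_N$ become essential: the orders of $\int_\Omega U_\varepsilon^2$, $\int_\Omega U_\varepsilon^{2^*-1}$ and $\int_\Omega U_\varepsilon^{2^*}$ change sharply between $N=4,5,6$ and $N\ge 7$, and in each low-dimensional case a quantitative bound on $\mu$ is needed so that the $\mu$-dependent loss does not swamp the $\lambda$-driven gain, producing exactly the endpoints $\sqrt{6}/9$, $\mu^*$ and $1$ that define $\mathbb{I}_N$. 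Once $c<c^*$ is established, concentration-compactness / Brezis--Lieb arguments applied to both equations upgrade the weak convergence to strong convergence in $H^1_0(\Omega)\times H^1_0(\Omega)$, hence $(u_0,v_0)\in\mathcal{N}$ is a minimizer.

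Finally, replacing $(u_0,v_0)$ by $(|u_0|,|v_0|)$ and rescaling back to $\mathcal{N}$ one may take $u_0,v_0\ge 0$, and the strong maximum principle applied to each equation of \eqref{eq} yields $u_0,v_0>0$ in $\Omega$. The principal difficulty is the dimension-sensitive test-pair estimate $c<c^*$: it requires precise Talenti integral asymptotics in each dimension and a careful joint construction of the two components so that the quantitative thresholds forcing $\mu\in\mathbb{I}_N$ are matched exactly.
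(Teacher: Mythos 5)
Your overall scheme (Nehari minimization plus a Brezis--Nirenberg level comparison and a maximum principle) is the right spirit, but two of your steps fail and a third is essentially hand-waved; all three are precisely the points the paper identifies as the genuine difficulties.

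First, the claim that for nonnegative $u,v\not\equiv 0$ the two-parameter map $(s,t)\mapsto \mathcal{J}(su,tv)$ has a unique interior critical point which is a strict global maximum is not true. Writing, say for $N=4$, $\alpha=\|\nabla u\|_2^2-\lambda\|u\|_2^2$, $\beta=\|\nabla v\|_2^2$, $\gamma=\|v\|_4^4$, $\delta=\int u^3 v$, and eliminating $s$ from the stationarity equations, one is led to a cubic in $\tau=t^2$ of the form $\mu\gamma\tau^3-\beta\tau^2+\alpha^3/\delta^2=0$, which for given $u,v$ may have zero, one or two positive roots depending on the sizes of $\alpha,\beta,\gamma,\delta,\mu$. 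So some pairs cannot be projected onto $\mathcal{N}$ at all, and when they can the projection need not be unique. The paper explicitly flags this (the nonlinearity is sign-changing and the two-parameter Nehari projection fails) and circumvents it by minimizing on the larger manifold $\mathcal{N}'=\{H(u,v)=0\}$ obtained from the single combined constraint $H=\langle\mathcal{J}'(u,v),(u,\tfrac{1}{2^*-1}v)\rangle$, together with the admissible set $\mathcal{A}$ on which the one-parameter fibering $t\mapsto\mathcal{J}(tu,tv)$ does have a unique maximum, and then shows $\inf_{\mathcal{N}}\mathcal{J}=\inf_{\mathcal{N}'}\mathcal{J}=\mathcal{B}$.

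Second, the positivity step is not salvageable by ``replacing $(u_0,v_0)$ by $(|u_0|,|v_0|)$ and rescaling back''. Since the coupling term is $-\tfrac{1}{2^*-1}\int_\Omega|u|^{2^*-1}v$ (odd in $v$), one has $\mathcal{J}(|u|,|v|)\neq\mathcal{J}(u,v)$ in general, and after passing to absolute values the pair need no longer be in $\mathcal{N}$ nor at the same energy. The paper instead builds a genuinely $C^1$ modified functional $\mathcal{J}_+$ in which $v$ is \emph{not} fully replaced by $v_+$ in the coupling term, proves that a $(PS)_c$ sequence for $\mathcal{J}_+$ can be replaced by a nonnegative one, and then runs the whole compactness argument for $\mathcal{J}_+$; positivity follows from the strong maximum principle only at the very end.

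Third, you treat $c^*$ as an unspecified ``first level where (PS) fails'' and attribute the thresholds in $\mathbb{I}_N$ to Talenti asymptotics in the cutoff estimate. In the paper the threshold is the ground-state level $A$ of the limit system on $\R^N$, and an entire section is devoted to computing $A$ and showing that for $\mu\in\mathbb{I}_N$ it is attained by a pair of the form $(kU_{\varepsilon,y},lU_{\varepsilon,y})$ with $k>0$. The constraint $\mu\in\mathbb{I}_N$ is exactly what guarantees $k>0$; this is essential because the $\lambda$-driven energy gain in the comparison $\mathcal{B}<A$ comes from $-\tfrac{\lambda}{2}\|k U_\varepsilon\|_2^2$, which vanishes if $k=0$. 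So the restriction on $\mu$ is a structural constraint on the limit problem, not a byproduct of the $\varepsilon$-expansion, and your proposal is silent on how to identify and attain $A$. You also omit the check that the weak limit is not semitrivial (the paper has a separate proposition ruling out $(0,v)$ ground states). As written, the proof does not close.
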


Due to the presence of two critical terms in the  functional $\J$, whose sum may change sign, there are some difficulties in estimation of the Mountain Pass level for which Palais-Smale sequences are convergent. Therefore the classical approach by Brezis and Nirenberg in \cite{BN}, seems to be difficult to adopt. 
Moreover employing the Nehari manifold technique for a system of equations like e.g. in \cite{BWW,CZ,CZ2,CZ3,DWW,MMP,Sirakov,WW}
one might expect that for any nontrivial $(u,v)$, there are  unique $s_0,t_0>0$ such that $\J(s_0u,t_0v)=\max_{s,t\geq 0}\J(su,tv)$. However not all functions can be projected on $\mathcal{N}$ due to the sign-changing nonlinearity. 
Thus, in order to obtain Theorem \ref{MainTh}, we proceed as follows. First of all, in Section \ref{SectionLimitProb}, we consider the {\em limit} case ($\Omega=\R^N$ and $\lambda=0$), which, as usual, plays a crucial role in comparison of the ground state levels and we construct ground states for this last problem by means of the Aubin-Talenti instantons \cite{Aub,Tal}. In Subsection \ref{subs21} we provide results concerning the limiting case for $N=4$, in Subsection \ref{subs22} we consider the remaining cases $N\geq 5$. 
Then, in Section \ref{SectionProofMainTh}, we restrict our considerations to a set $\mathcal{A}$ of {\em admissible} pairs  (see \eqref{eq:constr}) such that any function in $\mathcal{A}$ can be projected onto $\mathcal{N}$. Next  we observe that almost all elements of a Palais-Smale sequence of $\J$ are admissible and can be projected on the appropriate Nehari manifold of the limiting problem. This enable us to compare the ground state level with the Mountain Pass level of \eqref{eq} using Lemma \ref{LemmaStep1}. Finally we get a nontrivial weak limit of the Palais-Smale sequence, in 
which $\J$ attains its ground state level. We note that obtaining the positivity of solutions to \eqref{eq} is not straightforward since $\J(u,v)\neq \J(|u|,|v|)$. Moreover, a standard procedure based on replacing $u$ and $v$ by the positive parts $u_+$ and $v_+$ in the nonlinear terms in $\J$ does not work since the obtained functional is not of $C^1$-class. These difficulties are overcome at the end of Section \ref{SectionProofMainTh} by defining a suitable $C^1$-functional $\J_+$ (see \eqref{DefOfJplus}) and replacing Palais-Smale sequences by nonnegative ones. 

Finally Section \ref{SectionNonexisctence} is devoted to the following nonexistence results, which shows, in a certain sense, the optimality of the hypotheses in Theorem \ref{MainTh}. Let
\begin{equation}
\label{muN}
\mu_N:=
\begin{cases}
\displaystyle\frac{2(N-2)}{N+2}\left(\frac{6-N}{N+2}\right)^{\frac{6-N}{2(N-2)}} & \hbox{if } N=4,5,\\
1 & \hbox{if } N=6.
\end{cases}
\end{equation}
We have
\begin{Th}\label{ThNonexistence}
Problem \eqref{eq} has no solution provided that one of the following conditions holds:
\begin{enumerate}
\item \label{131} $\mu>\mu_N$ and $\lambda\leq0$ ($N=4,5,6$);
\item \label{132} $\mu\in\R$ and $\lambda\geq\lambda_1(\Omega)$;
\item \label{133} $\mu\in\R$, $\lambda\leq 0$ and $\Omega$ is smooth and starshaped.
\end{enumerate}
\end{Th}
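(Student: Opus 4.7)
For part \ref{132} the argument is immediate: testing the first equation of \eqref{eq} against the positive first Dirichlet eigenfunction $\varphi_1>0$ of $-\Delta$ on $\Omega$ and integrating by parts gives $(\lambda_1(\Omega)-\lambda)\int_\Omega u\,\varphi_1 = \int_\Omega u^{2^*-2}v\,\varphi_1$, whose right-hand side is strictly positive (since $u,v,\varphi_1>0$) while the left-hand side is nonpositive when $\lambda\ge\lambda_1(\Omega)$, a contradiction.

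Part \ref{133} rests on a Pohozaev-type identity for the system. After standard elliptic regularity (Brezis--Kato bootstrap plus Schauder) lifts the $H^1_0$ solution to $C^{2,\alpha}(\overline{\Omega})$, I would multiply the first equation by $x\cdot\nabla u$ and the second by $\tfrac{1}{2^*-1}\,x\cdot\nabla v$ and integrate over $\Omega$; the weight $\tfrac{1}{2^*-1}$ is the unique choice that cancels the two mixed bulk contributions $\pm\tfrac{1}{2^*-1}\int_\Omega u^{2^*-1}(x\cdot\nabla v)$. Combining the resulting identity with the two energy identities $\|\nabla u\|_2^2=\lambda\|u\|_2^2+\int_\Omega u^{2^*-1}v$ and $\|\nabla v\|_2^2=\mu\|v\|_{2^*}^{2^*}+\int_\Omega u^{2^*-1}v$ (obtained by testing with $u$ and $v$), and using the relation $(N-2)\,2^*/2=N$, every remaining bulk term collapses and one obtains
\[
\frac{1}{2}\int_{\partial\Omega}(x\cdot\nu)(\partial_\nu u)^2\,d\sigma + \frac{1}{2(2^*-1)}\int_{\partial\Omega}(x\cdot\nu)(\partial_\nu v)^2\,d\sigma \;=\; \lambda\int_\Omega u^2.
\]
After translating so that the origin is an interior starshape center, $x\cdot\nu\ge 0$ on $\partial\Omega$ and the left-hand side is nonnegative; for $\lambda<0$ the right-hand side is strictly negative, and for $\lambda=0$ both boundary integrals vanish, which combined with strict starshapedness forces $\partial_\nu u\equiv 0$ on $\partial\Omega$, contradicting Hopf's lemma applied to $-\Delta u+(-\lambda)u = u^{2^*-2}v\ge 0$ with $u>0$ in $\Omega$.

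Part \ref{131} is the main challenge because the threshold $\mu_N$ must be recovered sharply. The plan is to test the first equation against $v$ and the second against $u$, equate the two resulting expressions for $\int_\Omega \nabla u\cdot\nabla v$, and use $\lambda\le 0$ to obtain
\[
\int_\Omega u^{2^*-2}v^2 \;\ge\; \mu\int_\Omega uv^{2^*-1} + \int_\Omega u^{2^*}.
\]
The key algebraic observation is the pointwise identity $u^{2^*-2}v^2 = (uv^{2^*-1})^{\frac{2}{2^*-1}}(u^{2^*})^{\frac{2^*-3}{2^*-1}}$, whose exponents are nonnegative and sum to $1$ precisely when $N\le 6$; H\"older's inequality with the conjugate pair $\bigl(\tfrac{2^*-1}{2},\tfrac{2^*-1}{2^*-3}\bigr)$ then yields
\[
\int_\Omega u^{2^*-2}v^2 \;\le\; \Bigl(\int_\Omega uv^{2^*-1}\Bigr)^{\!\frac{2}{2^*-1}}\Bigl(\int_\Omega u^{2^*}\Bigr)^{\!\frac{2^*-3}{2^*-1}}.
\]
Setting $\xi:=\int_\Omega uv^{2^*-1}\big/\int_\Omega u^{2^*}>0$ and dividing through by $\int_\Omega u^{2^*}$ reduces the whole problem to the scalar inequality $\xi^{2/(2^*-1)} -\mu\,\xi \ge 1$. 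Since $2/(2^*-1)\le 1$ for $N\le 6$, the left-hand side is concave in $\xi>0$ and a routine one-variable optimization using $2^*-1=(N+2)/(N-2)$ and $2^*-3=(6-N)/(N-2)$ shows that its maximum equals $1$ exactly at $\mu=\mu_N$ (it collapses to $(1-\mu)\xi$ for $N=6$, forcing $\mu<1$); hence $\mu>\mu_N$ gives a contradiction.

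The main obstacle lies in part \ref{131}: locating the sharp H\"older splitting that isolates the ratio $\xi$ and reproduces the precise threshold $\mu_N$ of \eqref{muN}. Parts \ref{132} and \ref{133} are, by contrast, standard consequences of a single test-function computation and a Pohozaev integration, respectively.
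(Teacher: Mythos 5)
Your proofs of parts (\ref{132}) and (\ref{133}) match the paper's: the eigenfunction test against $\varphi_1$, and the Pohozaev identity (the paper multiplies the two equations by $x\cdot\nabla u$ and $x\cdot\nabla v$ and then combines the resulting identities with weight $\tfrac{1}{2^*-1}$, which is algebraically the same as your weighting of the test functions themselves; both yield the identity $\tfrac{1}{2}\int_{\partial\Omega}(x\cdot {\bf n})|\partial_{\bf n}u|^2+\tfrac{1}{2(2^*-1)}\int_{\partial\Omega}(x\cdot {\bf n})|\partial_{\bf n}v|^2=\lambda\|u\|_2^2$, from which the conclusion follows as you describe).

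Part (\ref{131}) is where you genuinely deviate. After cross-testing to obtain the energy balance, the paper argues \emph{pointwise}: with $m=u/v$, the integrand in
$\int_\Omega u\bigl(u^{2^*-1}-u^{2^*-3}v^2+\mu v^{2^*-1}-\lambda v\bigr)=0$
equals $u\,v^{2^*-1}f_N(m)-\lambda u v$, and for $\mu>\mu_N$ one has $f_N(m)=m^{2^*-1}-m^{2^*-3}+\mu>0$ for every $m>0$, while $-\lambda uv\ge 0$, so the integral is strictly positive — contradiction. You instead pass to the integral level first, apply H\"older with exponents $\bigl(\tfrac{2^*-1}{2},\tfrac{2^*-1}{2^*-3}\bigr)$ (valid precisely for $N\le 6$), and reduce to the scalar inequality $\xi^{2/(2^*-1)}-\mu\xi\ge 1$. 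Both routes produce the same sharp threshold $\mu_N$ — this is not an accident: the maximizer of your one-variable function corresponds to H\"older equality, i.e.\ to the ratio $u/v$ being constant, which is exactly the maximizer of the paper's pointwise function — but the pointwise argument is slightly more elementary (no H\"older needed) and makes the role of $f_N$ (which already appears in Section~\ref{subs22}) transparent. Your version has the virtue of exhibiting $\mu_N$ as the value at which a clean concave optimization in a single integral ratio degenerates, and of making the $N=6$ case (where the H\"older factor is trivial and the inequality collapses to $(1-\mu)\xi\ge 1$) explicit. Either proof is complete.
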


\noindent In the paper $C$ denotes a generic positive constant which can change from line to line.

\section{The limit problem}\label{SectionLimitProb}

First of all, let us recall some well known facts. Let $S$ be the best constant such that
\begin{equation}
\label{bestsob}
S \left(\int_{\R^N} |u|^{2^*} \right)^{2/{2^*}}
\leq \int_{\R^N}|\nabla u|^2
\quad
\hbox{for all } u \in \mathcal{D}^{1,2}(\R^N)
\end{equation}
and let us consider the Aubin-Talenti instantons 
\[
U_{\eps,y}(x)=[N(N-2)]\left(\frac{\eps}{\eps^2 + | x - y |^2}\right)^{(N-2)/2},
\] 
with $\eps>0,y\in\R^N$ (see \cite{Aub,Tal}). 
It is well known that the functions $U_{\eps,y}\in\mathcal{D}^{1,2}(\R^N)$ are solutions of
\begin{equation}
\label{eq:lim3}
-\Delta u=|u|^{2^*-2}u \hbox{ in }\R^N,
\end{equation}
satisfy
$$\int_{\R^N}|\nabla U_{\eps,y}|^2=\int_{\R^N}|U_{\eps,y}|^{2^*}=S^{N/2}$$
and 
$\{U_{\eps,y}\in \mathcal{D}^{1,2}(\R^N)|,\;\eps>0,y\in\R^N4\}$ consists of all positive solutions of \eqref{eq:lim3}.\\

In order to estimate the energy levels of $\J$, in this section we consider the {\em limit} system
\begin{equation}\label{eq2bis}
\begin{cases}
-\Delta u =  |u|^{2^*-3} uv
&
\hbox{ on }\R^N,\\
-\Delta v = \mu |v|^{2^*-2}v +  |u|^{2^*-1}
&
\hbox{ on }\R^N,\\
u,v\in \mathcal{D}^{1,2}(\R^N)
\end{cases}
\end{equation}
where 
$\mathcal{D}^{1,2}(\R^N)=\left\{ u\in L^{2^*} (\mathbb{R}^N) \;\vert\; |\nabla u| \in L^2 (\mathbb{R}^N) \right\}$, equipped with the norm $(\int_{\mathbb{R}^N} |\nabla \cdot |^2)^{1/2}$.
We look for nontrivial solutions of \eqref{eq2bis} as critical points of the functional
$$\J_0(u,v)=\frac{1}{2}\int_{\R^N}|\nabla u|^2
+\frac{1}{2(2^*-1)}\int_{\R^N}|\nabla v|^2
-\frac{\mu}{2^*(2^*-1)}\int_{\R^N} |v|^{2^*}
-\frac{1}{2^*-1}\int_{\R^N}  |u|^{2^*-1}v$$
defined in $\mathcal{D}^{1,2}(\R^N)\times\mathcal{D}^{1,2}(\R^N)$. In particular, we are interested to ground state solutions of \eqref{eq2bis} of the form $(kU_{\eps,y},lU_{\eps,y})$ with $k,l>0$.
So we consider
$$\mathcal{N}_0:=\left\{(u,v)\in (\mathcal{D}^{1,2}(\R^N)\times \mathcal{D}^{1,2}(\R^N))\setminus\{(0,0)\}\;\vert 
\;\mathbf{G}_0 (u,v)=(0,0)
\right\}$$
where
\[
\mathbf{G}_0 (u,v)
=\left(\int_{\R^N}|\nabla u|^2
- \int_{\R^N}  |u|^{2^*-1}v
,
\int_{\R^N}|\nabla v|^2
-\mu\int_{\R^N} |v|^{2^*}
-\int_{\R^N}  |u|^{2^*-1}v
\right)
\]
and
$$\mathcal{N}'_0:=\left\{(u,v)\in (\mathcal{D}^{1,2}(\R^N)\times \mathcal{D}^{1,2}(\R^N))\setminus\{(0,0)\}
\;\vert\;
H_0 (u,v)=0
\right\}$$
where
\[
H_0 (u,v)
=\int_{\R^N}|\nabla u|^2
+ \frac{1}{2^*-1}\int_{\R^N}|\nabla v|^2
-\frac{2^*}{2^*-1}\int_{\R^N} |u|^{2^*-1}v
-\frac{\mu}{2^*-1}\int_{\R^N} |v|^{2^*}.
\]
Of course 
$\mathcal{N}_0$ and $\mathcal{N}'_0$ are $C^1$-manifolds since, for all $(u,v)\in \mathcal{N}_0$,
\[
\mathbf{G}'_0 (u,v)[u,v]=\left((2-2^*) \int_{\R^N}|\nabla u|^2, (2-2^*) \int_{\R^N}|\nabla v|^2 \right) \neq (0,0)
\]
and, for all $(u,v)\in\mathcal{N}_0'$,
$$
H'_0(u,v)[u,v]
=(2-2^*)\int_{\R^N}|\nabla u|^2 +
\frac{2-2^*}{2^*-1}\int_{\R^N}|\nabla v|^2 \neq 0.$$ 

Let us define 
\[
A:=\inf_{(u,v)\in\mathcal{N}_0}\J_0(u,v)
\qquad
\hbox{and}
\qquad
A':=\inf_{(u,v)\in\mathcal{N}'_0}\J_0(u,v).
\]


Since $\mathcal{N}_0\subset \mathcal{N}'_0$, then $$A'\leq A.$$

In the next subsections we find ground state for \eqref{eq2bis}, we show that $A=A'$ and we evaluate exactly the ground state level.

\subsection{The limit problem for $N=4$}\label{subs21}
In this subsection focus on the case $N=4$.
\begin{Lem}
\label{le:n0nem}
If $0\leq\mu\leq 2\sqrt{3}/9$, then $\mathcal{N}_0\neq\emptyset$.
\end{Lem}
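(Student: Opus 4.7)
The plan is to exhibit an explicit element of $\mathcal{N}_0$ of the form $(kU_{\eps,y}, lU_{\eps,y})$ with $k,l>0$, exploiting the fact that the Aubin--Talenti instantons satisfy \eqref{eq:lim3} and the normalization $\int_{\R^4}|\nabla U_{\eps,y}|^2 = \int_{\R^4}|U_{\eps,y}|^{2^*} = S^{2}$. For $N=4$ we have $2^*=4$, so, writing $S_*=S^{N/2}$ and substituting $u = kU_{\eps,y}$, $v = lU_{\eps,y}$, the two components of $\mathbf{G}_0(u,v)=0$ reduce (after dividing by $S_*$) to the algebraic system
\begin{equation*}
k^2 = k^3 l, \qquad l^2 = \mu\, l^4 + k^3 l.
\end{equation*}

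Assuming $k,l>0$, the first equation gives $l = 1/k$, and inserting this into the second yields, after multiplying by $l^3$,
\begin{equation*}
l^4 - \mu\, l^6 = 1.
\end{equation*}
So the lemma reduces to showing that for every $\mu \in [0, 2\sqrt{3}/9]$ this polynomial equation has at least one positive root $l$; then $k := 1/l > 0$, and the pair $(kU_{\eps,y}, lU_{\eps,y})$ lies in $\mathcal{N}_0$.

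The existence of such an $l$ is a one-variable calculus exercise. I would study $f(l) := l^4 - \mu l^6$ on $(0,\infty)$. For $\mu = 0$ the root is $l=1$. For $\mu>0$, $f$ vanishes at $0$ and at $l = \mu^{-1/2}$, and a short computation shows its unique critical point on $(0,\mu^{-1/2})$ occurs at $l_\mu = \sqrt{2/(3\mu)}$, with maximum value
\begin{equation*}
f(l_\mu) = \frac{4}{9\mu^2} - \mu \cdot \frac{8}{27\mu^3} = \frac{4}{27 \mu^2}.
\end{equation*}
Hence $f(l)=1$ has a positive solution precisely when $4/(27\mu^2) \geq 1$, i.e.\ $\mu \leq 2/(3\sqrt{3}) = 2\sqrt{3}/9$, which is exactly the assumed range. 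This completes the argument.

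There is no real obstacle here — the only thing to get right is the bookkeeping of the exponents (using $2^*=4$ for $N=4$) and the determination of the sharp threshold $2\sqrt{3}/9$ through the maximization of $f$. The same Ansatz will presumably not suffice in the critical dimensions $N\geq 5$ (Subsection~\ref{subs22}), where the exponents in $\J_0$ no longer match as cleanly, which is why the authors treat that case separately.
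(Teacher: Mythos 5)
Your proposal is correct and follows essentially the same route as the paper: both reduce the membership condition for a pair proportional to a single fixed positive function to a cubic equation whose positive root exists precisely for $\mu\le 2\sqrt3/9$. The only cosmetic differences are that the paper works with an arbitrary $u>0$ (rescaled so that the same algebra holds) and parametrizes the cubic by $\bar m = k/l$ (so that $\bar m^3-\bar m+\mu=0$), whereas you fix $u=U_{\eps,y}$ and solve directly in $l$ via $l^4-\mu l^6=1$; the two are related by $\bar m = 1/l^2$.
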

\begin{proof}
Let $u\in \mathcal{D}^{1,2}(\R^4)$, $u > 0$ and $\bar m$ be a strictly positive solution of
\begin{equation*} 
m^3-m+\mu=0 
\end{equation*}
which exists since, by $0\leq\mu\leq 2\sqrt{3}/9$, the function \begin{equation}
\label{f}
f(m)=m^3 - m+ \mu
\end{equation}
satisfies $f(0)=\mu \geq 0$ and, in the minimum point $\sqrt{3}/3$, $f(\sqrt{3}/3)=\mu - 2\sqrt{3}/9\leq 0$. Then
\[
\left(\sqrt{\bar{m }\left(\int_{\R^4}|\nabla u|^2\right)  \left(\int_{\R^4} u^4\right)^{-1} } u,
\sqrt{\left(\int_{\R^4}|\nabla u|^2\right)  \left(\bar{m}\int_{\R^4} u^4\right)^{-1}}u\right)
\in \mathcal{N}_0.
\]
%
%
%
%
\end{proof}

To state a condition that allows to get $A'=A$, we need the following technicalities.
\begin{Lem}\label{Lempq} 
The system 
\begin{equation}\label{eq_pq}
\left\{
  \begin{array}{lcl}
    kl=1,\\
    \mu l^3+k^3=l\\
    k,l>0
  \end{array}
\right.
\end{equation}
has a solution if and only if $\mu\leq 2\sqrt{3}/9$. In particular, if
$\mu\leq 0$ or $\mu=  2\sqrt{3}/9$, then the solution is unique and, if $0<\mu< 2\sqrt{3}/9$, then system \eqref{eq_pq} has two different solutions.
\end{Lem}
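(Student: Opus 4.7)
The plan is to eliminate one unknown using the first equation and reduce to the cubic already analysed in Lemma \ref{le:n0nem}.

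From $kl=1$ we get $k=1/l$, so substituting in the second equation of \eqref{eq_pq} and multiplying by $l^3$ yields
\[
\mu l^{6} + 1 = l^{4}.
\]
The change of variables $m:=1/l^{2}$ (a bijection between $(0,\infty)$ and $(0,\infty)$) transforms this, after multiplication by $m^3$, into
\[
m^3 - m + \mu = 0,
\]
which is exactly the equation $f(m)=0$ for $f$ defined in \eqref{f}. Conversely, any positive root $m$ of $f$ gives back $l=1/\sqrt{m}>0$ and $k=1/l=\sqrt{m}>0$ that solve \eqref{eq_pq}. Thus counting positive solutions of \eqref{eq_pq} is equivalent to counting positive roots of $f$.

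Next I would analyse $f$ on $(0,\infty)$ with the information already used in Lemma \ref{le:n0nem}: one has $f'(m)=3m^{2}-1$, so on $(0,\infty)$ the function $f$ is strictly decreasing on $(0,\sqrt{3}/3)$ and strictly increasing on $(\sqrt{3}/3,\infty)$, with $f(0)=\mu$, $f(\sqrt{3}/3)=\mu-2\sqrt{3}/9$ and $f(m)\to+\infty$ as $m\to+\infty$. From this the following cases are immediate:
\begin{itemize}
\item[(i)] if $\mu>2\sqrt{3}/9$, then $\min_{m\geq 0}f(m)>0$, so $f$ has no positive root and \eqref{eq_pq} has no solution;
\item[(ii)] if $\mu=2\sqrt{3}/9$, the minimum value is $0$ attained only at $m=\sqrt{3}/3$, so there is exactly one positive root;
\item[(iii)] if $0<\mu<2\sqrt{3}/9$, then $f(0)>0>f(\sqrt{3}/3)$ and $f(+\infty)=+\infty$, so by monotonicity on each piece there are exactly two positive roots, one in $(0,\sqrt{3}/3)$ and one in $(\sqrt{3}/3,+\infty)$;
\item[(iv)] if $\mu\leq 0$, then $f(0)\leq 0$ and $f(\sqrt{3}/3)<0$, while $f(+\infty)=+\infty$, so there is exactly one positive root (lying in $[\sqrt{3}/3,+\infty)$).
\end{itemize}

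Combining the reduction with this case analysis gives the claim. The argument is essentially elementary once the substitution $m=1/l^{2}$ is made; the only point deserving care is making sure the correspondence $l\leftrightarrow m$ preserves positivity and multiplicity, which is transparent since $m\mapsto 1/\sqrt{m}$ is a strict bijection of $(0,\infty)$ onto itself.
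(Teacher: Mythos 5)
Your proof is correct and follows the same idea as the paper: reduce the system via $k=1/l$, $m=k^2$ to the cubic $f(m)=m^3-m+\mu=0$ and count its positive roots. You are somewhat more careful than the paper in making the equivalence (system $\leftrightarrow$ positive roots of $f$) an explicit bijection, which is a welcome small improvement since the paper's proof only exhibits the reverse construction $(k,l)=(\sqrt{\bar m},1/\sqrt{\bar m})$.
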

\begin{proof}
We argue as in the Proof of Lemma \ref{le:n0nem}. Indeed the function $f$ has a unique strictly positive zero if $\mu\leq 0$ or $\mu=  2\sqrt{3}/9$ and two strictly positive zeros if $0<\mu< 2\sqrt{3}/9$. Then, denoted with $\bar{m}$ such zeros, we have that 
$(k,l)= (\sqrt{\bar{m}},1/\sqrt{\bar{m}})$ satisfy system \eqref{eq_pq}.
\end{proof}

\begin{Lem}\label{Lempq_estimate}
Let $k,l>0$ satisfy 
\begin{equation}\label{eq:n10}
k^2 + \frac{1}{3} l^2 \leq \frac{4}{3} k^3 l + \frac{\mu}{3} l^4.
\end{equation}
\begin{enumerate}
\item \label{231} If $\mu=0$ then
\[
\frac{4}{3} \leq k^2+\frac{1}{3}l^2.
\]
\item \label{232} If $\mu\in(0,\sqrt{6}/9)$
then 
\begin{equation}
\label{eq:minkl}
k_2^2+\frac{1}{3}l_2^2
=
\min_{i=1,2}\{k_i^2+\frac{1}{3}l_i^2\}
\leq
k^2+\frac{1}{3}l^2
\end{equation}
and
\begin{equation*}
k_2^2+\frac{1}{3}l_2^2
<
\frac{1}{3\mu},
\end{equation*}
where $(k_i,l_i)$ are the solutions of the system \eqref{eq_pq}, $k_1<k_2$ and $l_2<l_1$.
\item \label{232bis} If $\mu=\sqrt{6}/9$ then 
\[
k_2^2+\frac{1}{3}l_2^2
= \frac{1}{3\mu}
\leq
k^2+\frac{1}{3}l^2.
\]
\item \label{233} If $\mu\in(\sqrt{6}/9,2\sqrt{3}/9]$
then 
\[
\frac{1}{3\mu}< k^2+\frac{1}{3}l^2. 
\]
\end{enumerate}
\end{Lem}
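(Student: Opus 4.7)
My plan is to reduce the two-variable inequality to a one-variable optimization by setting $s = k/l > 0$. After dividing \eqref{eq:n10} by $l^2$, the condition becomes $3s^2 + 1 \le l^2(4s^3 + \mu)$, i.e.\
\[
l^2 \;\ge\; \frac{3s^2+1}{4s^3+\mu}
\]
(the denominator is positive since $s>0$ and $\mu\ge 0$). Plugging this into $h(k,l):=k^2+l^2/3=(s^2+1/3)l^2$ yields the basic estimate
\[
h(k,l)\;\ge\;\psi(s):=\frac{(3s^2+1)^2}{3(4s^3+\mu)},
\]
with equality precisely on the curve $l^2=(3s^2+1)/(4s^3+\mu)$. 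When $s=m$ satisfies $f(m)=m^3-m+\mu=0$ (cf.\ \eqref{f}), this curve passes through $(k,l)=(\sqrt m,1/\sqrt m)$, which are exactly the pairs produced in Lemma \ref{Lempq}. So the proof reduces to computing $\inf_{s>0}\psi(s)$ and comparing it with the limiting value $\psi(0^+)=1/(3\mu)$ (which corresponds to the admissible corner $(k,l)=(0,1/\sqrt\mu)$).

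Logarithmic differentiation gives
\[
\frac{\psi'(s)}{\psi(s)}=\frac{12s\,f(s)}{(3s^2+1)(4s^3+\mu)},
\]
so on $(0,\infty)$ the sign of $\psi'$ agrees with that of $f$. Moreover, at any zero $m$ of $f$ the identity $4m^3+\mu=m(3m^2+1)$ simplifies
\[
\psi(m)=\frac{3m^2+1}{3m}=m+\frac{1}{3m}=:\phi(m).
\]
For $\mu=0$, $f(s)=s(s^2-1)$ has unique positive zero $s=1$, $\psi$ decreases on $(0,1)$ and increases on $(1,\infty)$, and $\psi(1)=4/3$ gives (1). For $0<\mu\le 2\sqrt{3}/9$, the sign chart for $f$ already used in the proof of Lemma \ref{le:n0nem} gives two positive zeros $m_1\le 1/\sqrt 3\le m_2$ with $f<0$ on $(m_1,m_2)$; hence $\psi$ increases on $(0,m_1)$ from the limit $1/(3\mu)$ to $\phi(m_1)$, decreases on $(m_1,m_2)$ to $\phi(m_2)$, then increases to $+\infty$ on $(m_2,\infty)$. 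Therefore
\[
\inf_{s>0}\psi(s)=\min\bigl\{\phi(m_2),\,1/(3\mu)\bigr\},
\]
and the identification $(\sqrt{m_2},1/\sqrt{m_2})=(k_2,l_2)$ matches the notation in the statement.

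The decisive step — and the main obstacle — is to compare $\phi(m_2)$ with $1/(3\mu)$. Using $\mu=m_2(1-m_2^2)$ coming from $f(m_2)=0$, a direct computation yields
\[
\phi(m_2)-\frac{1}{3\mu}=\frac{m_2(2-3m_2^2)}{3(1-m_2^2)},
\]
whose sign (since $m_2\in(0,1)$) is that of $2-3m_2^2$. Evaluating $f$ at $m=\sqrt 6/3$ gives $f(\sqrt 6/3)=\mu-\sqrt 6/9$, so $m_2>\sqrt 6/3$ iff $\mu<\sqrt 6/9$. This pins down the trichotomy $\phi(m_2)<,=,>1/(3\mu)$ exactly on the $\mu$-ranges in (2), (2bis), (3). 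The strict inequality $h>1/(3\mu)$ in case (3) follows because $\psi(s)>1/(3\mu)$ for every $s>0$: the limit $1/(3\mu)$ is attained only as $s\to 0^+$, and the only interior local minimum $\psi(m_2)$ lies strictly above $1/(3\mu)$ in that range, so the infimum is never realized for $k,l>0$.
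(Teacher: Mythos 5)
Your argument is correct and follows essentially the same route as the paper: both reduce the inequality to the scalar function $\psi(s)=\frac{(3s^2+1)^2}{3(4s^3+\mu)}$ (you by dividing \eqref{eq:n10} by $l^2$, the paper by projecting $(k,l)$ to a boundary pair $(\bar k,\bar l)$ with the same ratio) and then analyze its critical points through the sign of $f(s)=s^3-s+\mu$. Your contribution is to make the threshold $\sqrt 6/9$ explicit via the identity $\phi(m_2)-\tfrac{1}{3\mu}=\frac{m_2(2-3m_2^2)}{3(1-m_2^2)}$ and the observation $f(\sqrt 6/3)=\mu-\sqrt 6/9$, steps the paper states but does not spell out.
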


\begin{proof}
Let us fix $k,l>0$ satisfying \eqref{eq:n10} and 
\[
\bar{k}=k\sqrt{\frac{3k^2+l^2}{l(4k^3+\mu l^3)}}
\quad \hbox{ and } \quad
\bar{l}=\sqrt{\frac{l(3k^2+l^2)}{4k^3+\mu l^3}}.
\] 
We have that
\begin{equation}
\label{eq:kbark}
0<\bar{k}\leq k, \quad 0<\bar{l}\leq l,
\end{equation}
\[
\frac{k}{l}= \frac{\bar{k}}{\bar{l}},
\]
\begin{equation}
\label{eq:curve}
\bar{k}^2 + \frac{1}{3} \bar{l}^2 = \frac{4}{3} \bar{k}^3 \bar{l} + \frac{\mu}{3} \bar{l}^4
\end{equation}
By \eqref{eq:kbark} we have that
\[
\bar{k}^2 + \frac{1}{3} \bar{l}^2
\leq
k^2+\frac{1}{3} l^2.
\]
So it is sufficient to prove \eqref{eq:minkl} for $(\bar k,\bar l)$.\\
We notice that, since the system
\begin{equation*}
\begin{cases}
\bar k=m\bar l\\
3 \bar k^2 +  \bar l^2 =  4\bar k^3 \bar l  + \mu \bar l^4\\
\bar l,\bar k>0
\end{cases}
\end{equation*}
admits a unique solution 
\begin{equation}
\label{curveparam}
\bar k = m\sqrt{\frac{3m^2+1}{4m^3+\mu}},
\qquad
\bar l= \sqrt{\frac{3m^2+1}{4m^3+\mu}},
\end{equation}
for every $m>0$, the curve given by \eqref{eq:curve} (for $\bar k,\bar l>0$), can be parametrized by $m$ using \eqref{curveparam}.
Thus  we consider
\begin{equation}
\label{eq:psi}
\bar{k}^2 + \frac{1}{3} \bar{l}^2 = \frac{(3m^2+1)^2}{3(4m^3+\mu )} =:\psi(m),
\quad
m>0.
\end{equation}
If $\mu=0$ then the function $\psi$ has a unique global minimum point (on the positive halfline) in $1$ and $\psi(1)=4/3$.\\
If $0<\mu< \frac{\sqrt{6}}{9}$ then
the function $\psi$ admits two critical points $m_1<m_2$ which solve the equation $ m^3 - m +\mu=0$ and the  global minimum $m_2$ (on the positive halfline) satisfies
\[
\psi ( m_2) < \frac{1}{3\mu} = \lim_{m\to 0^+} \psi(m).
\]
Moreover, if we take 
\[
k_i := m_i\sqrt{\frac{3m_i^2+1}{4m_i^3+\mu}}=\sqrt{m_i}
\qquad
\hbox{and}
\qquad
l_i:= \sqrt{\frac{3m_i^2+1}{4m_i^3+\mu}}=\frac{1}{\sqrt{m_i}},
\qquad
i=1,2,
\]
we have that $(k_i,l_i)$ solve system \eqref{eq_pq}, $k_1<k_2$, $l_2<l_1$ and
\[
\psi(m_2)=k_2^2 + \frac{1}{3} l_2^2.
\]  
If $\mu=\frac{\sqrt{6}}{9}$ then, for any $m>0$, 
$$\psi(m_2)= \lim_{m\to 0^+} \psi(m)=\frac{1}{3\mu}.$$
If $\frac{\sqrt{6}}{9}<\mu\leq\frac{2\sqrt{3}}{9}$ then  
$$\psi(m)> \lim_{m\to 0^+} \psi(m)=\frac{1}{3\mu}.$$
\end{proof}
%
Before we prove the main results of this section, we show the following preliminary  properties.
\begin{Prop}
\label{pr:stl}
Let  $\mu\in[0,\sqrt{6}/9)$.
\begin{enumerate}
\item \label{241} If $\mu=0$, $\mathcal{N}_0'$ does not contain semitrivial couples.
\item \label{242} If $\mu\in(0,\sqrt{6}/9)$, $\mathcal{N}_0'$ does not contain semitrivial couples $(u,0)$ and
\begin{equation}
\label{eq:A'}
A'<\inf_{(0,v)\in\mathcal{N}'_0}\J_0(0,v).
\end{equation}
\end{enumerate}
\end{Prop}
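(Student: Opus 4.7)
The plan splits naturally into handling the two kinds of semitrivial couples and, for case \ref{242}, producing a competitor to $A'$ that lies strictly below the infimum over the remaining semitrivial couples.

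The first step is essentially algebraic. If $(u,0) \in \mathcal{N}'_0$, then $H_0(u,0) = \int_{\R^4}|\nabla u|^2 = 0$, so $u \equiv 0$; thus neither case admits such couples. If $\mu = 0$ and $(0,v) \in \mathcal{N}'_0$, then $H_0(0,v) = \tfrac{1}{3}\int_{\R^4}|\nabla v|^2 = 0$ forces $v \equiv 0$, proving \ref{241}. For $\mu > 0$, the constraint $H_0(0,v) = 0$ reduces to $\int_{\R^4}|\nabla v|^2 = \mu \int_{\R^4}|v|^4$ (which is satisfied by suitably rescaled Aubin-Talenti instantons, so the infimum is taken on a nonempty set). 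Substituting this relation into $\J_0$ gives $\J_0(0,v) = \tfrac{1}{12}\int_{\R^4}|\nabla v|^2$, and combining the constraint with \eqref{bestsob} (i.e.\ $\int_{\R^4}|\nabla v|^2 \geq S(\int_{\R^4}|v|^4)^{1/2}$) forces $\int_{\R^4}|\nabla v|^2 \geq S^2/\mu$. Hence
\[\inf_{(0,v)\in\mathcal{N}'_0}\J_0(0,v) \geq \frac{S^2}{12\mu}.\]

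The second step constructs a competitor that beats this level. Fix an Aubin-Talenti instanton $U = U_{\eps,y}$, recall $\int_{\R^4}|\nabla U|^2 = \int_{\R^4} U^4 = S^2$, and consider $(kU, lU)$ for $k,l > 0$. A direct computation shows that $H_0(kU, lU) = 0$ is equivalent to the relation $k^2 + \tfrac{1}{3}l^2 = \tfrac{4}{3}k^3 l + \tfrac{\mu}{3}l^4$, and substituting this back into $\J_0$ collapses it to
\[\J_0(kU, lU) = \frac{S^2}{4}\left(k^2 + \tfrac{1}{3}l^2\right).\]
Choosing $(k,l) = (k_2, l_2)$ from system \eqref{eq_pq} (which, as already observed in the proof of Lemma \ref{Lempq_estimate}, automatically satisfies the above $H_0 = 0$ relation since the polynomial identity $m^3 - m + \mu = 0$ for $m = k_2^2$, $l_2 = 1/k_2$ forces it), Lemma \ref{Lempq_estimate}\ref{232} yields $k_2^2 + \tfrac{1}{3}l_2^2 < \tfrac{1}{3\mu}$. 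Therefore
\[A' \leq \J_0(k_2 U, l_2 U) = \frac{S^2}{4}\left(k_2^2 + \tfrac{1}{3}l_2^2\right) < \frac{S^2}{12\mu} \leq \inf_{(0,v)\in\mathcal{N}'_0}\J_0(0,v),\]
which is exactly \eqref{eq:A'}.

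The only delicate point is the strict inequality $k_2^2 + \tfrac{1}{3}l_2^2 < 1/(3\mu)$: this is the content of case \ref{232} of Lemma \ref{Lempq_estimate} and is precisely what restricts us to $\mu \in (0, \sqrt{6}/9)$, since at $\mu = \sqrt{6}/9$ the minimum of $\psi$ on the positive halfline coincides with its right-limit $1/(3\mu)$. Once this is in hand, the rest is a sequence of substitutions made transparent by the fact that, for Aubin-Talenti instantons, the gradient norm equals the $L^4$ norm.
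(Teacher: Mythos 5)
Your proof is correct and avoids the circularity trap (Proposition \ref{pr:stl} is used in the proof of Theorem \ref{PropAandA}, so you cannot invoke that theorem here, and you do not: you only need Lemma \ref{Lempq_estimate}\eqref{232}, which precedes it, plus a direct verification that $(k_2 U, l_2 U)\in\mathcal{N}_0'$). The route you take, however, is genuinely different from the paper's. You compute two explicit numbers and compare them: a Sobolev lower bound $\inf_{(0,v)\in\mathcal{N}'_0}\J_0(0,v) \geq S^2/(12\mu)$, and an explicit competitor level $\J_0(k_2U,l_2U)=\tfrac{S^2}{4}(k_2^2+\tfrac13 l_2^2)<S^2/(12\mu)$, with the strict inequality coming from Lemma \ref{Lempq_estimate}\eqref{232}. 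The paper instead never evaluates the semitrivial infimum; it takes an arbitrary $(0,v)\in\mathcal{N}_0'$, builds a competitor \emph{from the same $v$}, namely $(t(s)sv,t(s)v)\in\mathcal{N}_0'$, and shows $\J_0(t(s)sv,t(s)v)=\tfrac{(3s^2+1)^2\mu}{4s^3+\mu}\J_0(0,v)$, so that choosing one $s>0$ with that factor $<1$ gives \eqref{eq:A'} after passing to the infimum over $v$. Both arguments ultimately hinge on the same algebraic fact, that the curve value $\psi(m)=\tfrac{(3m^2+1)^2}{3(4m^3+\mu)}$ can be pushed strictly below $1/(3\mu)$ precisely when $\mu<\sqrt{6}/9$; the paper exhibits this via a concrete choice $s=2/(9\mu)$, while you cite the minimum point $m_2$ from Lemma \ref{Lempq_estimate}. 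Your version buys a sharp numerical value for both sides (and, in particular, the identity $\inf_{(0,v)\in\mathcal{N}'_0}\J_0(0,v)=S^2/(12\mu)$), whereas the paper's version is self-contained in the sense that it does not require knowing the extremizers of the Sobolev inequality to establish the lower bound.
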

\begin{proof}
Statement (\ref{241}) and the first part of (\ref{242}) are obvious. So it remains to prove \eqref{eq:A'}. We notice that $\mathcal{N}'_0$ contains couples $(0,v)$ with $v\in \mathcal{D}^{1,2}(\R^4)$: it is sufficient to take $v=\mu^{-1/2} U_{\eps,y}$.\\
Let $(0,v)\in \mathcal{N}'_0$. We have that
\[
H_0 (0,v)
=\frac{1}{3}\int_{\R^4}|\nabla v|^2
-\frac{\mu}{3}\int_{\R^4} v^4=0
\]
and so
\[
\J_0(0,v)= \frac{1}{12} \int_{\R^4}|\nabla v|^2.
\]
Moreover, for every $s>0$,
\[ 
(t(s)sv,t(s)v)\in\mathcal{N}_0'
\quad
\hbox{with }
t(s):=\left[\frac{(3s^2 + 1)\mu}{4s^3+\mu}\right]^{1/2}
\]
and then
$$A'\leq \J_0(t(s)sv,t(s)v)=\frac{1}{12}\frac{(3s^2+1)^2\mu}
{4s^3+\mu}\int_{\R^4}|\nabla v|^2.$$
Thus, passing to the infimum,
\[
A'\leq \frac{(3s^2+1)^2\mu}{4s^3+\mu}\inf_{(0,v)\in\mathcal{N}'_0}\J_0(0,v)
\]
and we conclude observing that, since $\mu\in(0,\sqrt{6}/9)$,
\[
\frac{(3s^2+1)^2\mu}{4s^3+\mu}\bigg|_{s=2/(9\mu)}<1.
\]
\end{proof}
\begin{Cor}
If $\mu\in[0,\sqrt{6}/9)$ and  $A'$ is attained for some $(u,v)\in\mathcal{N}_0'$, then $u\neq0$ and $v\neq0$.
\end{Cor}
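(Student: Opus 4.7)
The plan is to deduce this directly from Proposition \ref{pr:stl}, splitting into the two cases $\mu=0$ and $\mu\in(0,\sqrt{6}/9)$. Let $(u,v)\in\mathcal{N}_0'$ be a minimizer, so that $\J_0(u,v)=A'$.

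If $\mu=0$, item (\ref{241}) of Proposition \ref{pr:stl} asserts that $\mathcal{N}_0'$ contains no semitrivial couples at all, so automatically both $u\neq 0$ and $v\neq 0$.

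If $\mu\in(0,\sqrt{6}/9)$, item (\ref{242}) of Proposition \ref{pr:stl} tells us that $\mathcal{N}_0'$ contains no couple of the form $(u,0)$ with $u\neq 0$, hence the second component $v$ of the minimizer cannot vanish. Suppose now for contradiction that $u\equiv 0$; then $(0,v)\in\mathcal{N}_0'$, which implies
\[
A'=\J_0(0,v)\geq \inf_{(0,w)\in\mathcal{N}_0'}\J_0(0,w)>A'
\]
by \eqref{eq:A'}, a contradiction. Hence also $u\neq 0$.

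There is essentially no obstacle here: the corollary is a formal bookkeeping consequence of Proposition \ref{pr:stl}, whose strict inequality \eqref{eq:A'} is precisely designed to rule out semitrivial minimizers. All the real work — constructing the competitor $(t(s)sv,t(s)v)$ and checking that it beats the energy of $(0,v)$ for a suitable choice of $s$ — has already been carried out in the proof of Proposition \ref{pr:stl}.
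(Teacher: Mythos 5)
Your proof is correct and is precisely the argument the paper leaves implicit: the Corollary is stated without proof as an immediate consequence of Proposition \ref{pr:stl}, and you have spelled out exactly the intended bookkeeping, using item (\ref{241}) for $\mu=0$ and item (\ref{242}) together with the strict inequality \eqref{eq:A'} for $\mu\in(0,\sqrt{6}/9)$.
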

Using the notations introduced before we are ready to prove the following results.
\begin{Th}\label{PropAandA}
If $\mu\in(0,\sqrt{6}/9)$, then, for every $\varepsilon>0$ and $y\in\mathbb{R}^4$, we have that $(k_2U_{\eps,y},l_2U_{\eps,y})$
is a ground state solution of \eqref{eq2bis} and  
$$\J_0(k_2U_{\eps,y},l_2U_{\eps,y})=A=A'=\frac{1}{4}\Big(k_2^2+\frac{1}{3}l_2^2\Big)S^2.$$
\end{Th}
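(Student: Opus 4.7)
The plan is to establish the chain of inequalities $A' \leq A \leq \J_0(k_2 U_{\eps,y}, l_2 U_{\eps,y}) \leq A'$, with the common value equal to $\frac{S^2}{4}\bigl(k_2^2 + \frac{1}{3}l_2^2\bigr)$. The upper bound is essentially a computation; the hard part is the matching lower bound $A' \geq \frac{S^2}{4}\bigl(k_2^2 + \frac{1}{3}l_2^2\bigr)$.

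I would first check that $(k_2 U_{\eps,y}, l_2 U_{\eps,y})$ solves \eqref{eq2bis} and lies in $\mathcal{N}_0$. Since $(k_2, l_2)$ solves \eqref{eq_pq} by Lemma \ref{Lempq}, and $-\Delta U_{\eps,y} = U_{\eps,y}^3$ in $\R^4$ (as $2^*-1 = 3$ when $N=4$), substituting the ansatz into \eqref{eq2bis} reduces the two PDEs precisely to $k_2 l_2 = 1$ and $l_2 = k_2^3 + \mu l_2^3$, both of which hold by construction. The same algebra, together with $\|\nabla U_{\eps,y}\|_2^2 = \|U_{\eps,y}\|_4^4 = S^2$, yields $\mathbf{G}_0(k_2 U_{\eps,y}, l_2 U_{\eps,y}) = (0,0)$. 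Using the Nehari identities $k_2^3 l_2 = k_2^2$ and $\mu l_2^4 + k_2^3 l_2 = l_2^2$ to eliminate $k_2^3 l_2$ and $\mu l_2^4$ in the formula for $\J_0$, a short calculation then gives $\J_0(k_2 U_{\eps,y}, l_2 U_{\eps,y}) = \frac{S^2}{4}\bigl(k_2^2 + \frac{1}{3}l_2^2\bigr)$, so in particular $A \leq \frac{S^2}{4}\bigl(k_2^2 + \frac{1}{3}l_2^2\bigr)$.

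For the matching lower bound, I would use the constraint $H_0(u,v)=0$ to eliminate the nonlinear terms and rewrite, for every $(u,v) \in \mathcal{N}_0'$,
\[
\J_0(u,v) = \frac{1}{4}\Bigl(\|\nabla u\|_2^2 + \tfrac{1}{3}\|\nabla v\|_2^2\Bigr).
\]
For $(u,v) \in \mathcal{N}_0'$ with both components nontrivial, Hölder and the Sobolev inequality \eqref{bestsob} yield
\[
\int_{\R^4}|u|^3 v \leq \|u\|_4^3 \|v\|_4 \leq S^{-2}\|\nabla u\|_2^3 \|\nabla v\|_2, \qquad \|v\|_4^4 \leq S^{-2}\|\nabla v\|_2^4.
\]
Setting $k := \|\nabla u\|_2/S$ and $l := \|\nabla v\|_2/S$, the equation $H_0(u,v)=0$ collapses to exactly \eqref{eq:n10}, and Lemma \ref{Lempq_estimate}(2) then delivers $k^2 + \frac{1}{3}l^2 \geq k_2^2 + \frac{1}{3}l_2^2$, i.e.\ $\|\nabla u\|_2^2 + \frac{1}{3}\|\nabla v\|_2^2 \geq S^2\bigl(k_2^2 + \frac{1}{3}l_2^2\bigr)$. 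The semi-trivial case $u \equiv 0$ with $(0,v)\in \mathcal{N}_0'$ is handled directly: the constraint together with Sobolev forces $\|\nabla v\|_2^2 \geq S^2/\mu$, whence $\tfrac{1}{3}\|\nabla v\|_2^2/S^2 \geq \tfrac{1}{3\mu} > k_2^2 + \tfrac{1}{3}l_2^2$ by the strict estimate in Lemma \ref{Lempq_estimate}(2); the case $v \equiv 0$ forces $u \equiv 0$ and is excluded from $\mathcal{N}_0'$.

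Chaining these bounds yields $A' = A = \J_0(k_2 U_{\eps,y}, l_2 U_{\eps,y}) = \frac{S^2}{4}\bigl(k_2^2 + \frac{1}{3}l_2^2\bigr)$, and since the ansatz pair solves \eqref{eq2bis} and belongs to $\mathcal{N}_0$, it is a ground state. The main obstacle is spotting the normalization $(k,l) = (\|\nabla u\|_2, \|\nabla v\|_2)/S$: precisely this choice aligns the Hölder/Sobolev-weakened Nehari constraint with hypothesis \eqref{eq:n10} of Lemma \ref{Lempq_estimate}, so that the algebraic minimization of $\psi$ carried out there translates directly into the desired functional lower bound on $\mathcal{N}_0'$.
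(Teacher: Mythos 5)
Your proof is correct and follows essentially the same strategy as the paper: project via Sobolev and Hölder onto the algebraic inequality \eqref{eq:n10} and invoke Lemma \ref{Lempq_estimate}(\ref{232}), then close the chain $A'\leq A \leq \J_0(k_2 U_{\eps,y}, l_2 U_{\eps,y}) \leq A'$. The only difference is cosmetic: the paper normalizes by the $L^4$ norms, setting $k = \|u\|_4/\sqrt{S}$ and $l = \|v\|_4/\sqrt{S}$, and then needs one additional application of Sobolev to pass from $\|u\|_4^2 + \tfrac13\|v\|_4^2$ back to $\J_0$, whereas you normalize by gradient norms $k = \|\nabla u\|_2/S$, $l = \|\nabla v\|_2/S$, which matches $\J_0 = \tfrac14(\|\nabla u\|_2^2 + \tfrac13\|\nabla v\|_2^2)$ on $\mathcal{N}_0'$ directly; both derive \eqref{eq:n10} from $H_0=0$ plus Hölder and Sobolev. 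You also handle the semi-trivial $(0,v)$ case with a self-contained inequality where the paper instead cites Proposition \ref{pr:stl}(\ref{242}); the content is the same.
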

\begin{proof}
Let $\mu\in(0,\sqrt{6}/9)$. Since $(k_2,l_2)$ satisfies the system \eqref{eq_pq}, then it can be easily shown that for every $y\in\mathbb{R}^4$ we have that $\J'_0(k_2U_{\eps,y},l_2U_{\eps,y})=0$ and so
$(k_2U_{\eps,y},l_2U_{\eps,y})\in \mathcal{N}_0$.
Hence 
\begin{equation*}
A'\leq A\leq \J_0(k_2U_{\eps,y},l_2U_{\eps,y})=\frac{1}{4}\Big(k_2^2+\frac{1}{3}l_2^2\Big)S^2.
\end{equation*}
Let $\{(u_n,v_n)\}\subset\mathcal{N}_0'$ be a minimizing sequence, i.e. such that $\J_0(u_n,v_n)\to A'$. We notice that we can assume $u_n\neq 0$ and $v_n\neq 0$. Indeed, if $(u_n,v_n)\in\mathcal{N}_0'$, as observed before (see (\ref{242}) of Proposition \ref{pr:stl}), $v_n\neq 0$ and the existence of a subsequence such that $u_n = 0$ contradicts \eqref{eq:A'}.\\
Since 
\begin{align*}
S\left[\left(\int_{\mathbb{R}^4} u_n^4\right)^{1/2}
+\frac{1}{3} \left(\int_{\mathbb{R}^4} v_n^4\right)^{1/2}\right]
\leq &
\int_{\mathbb{R}^4} |\nabla u_n|^2
+\frac{1}{3} \int_{\mathbb{R}^4} |\nabla v_n|^2
= 
\frac{4}{3} \int_{\mathbb{R}^4} u_n^3 v_n
+ \frac{\mu}{3} \int_{\mathbb{R}^4} v_n^4\\
\leq &
\frac{4}{3}
\left(\int_{\mathbb{R}^4} u_n^4\right)^{3/4}
\left(\int_{\mathbb{R}^4} v_n^4\right)^{1/4}
+ \frac{\mu}{3} \int_{\mathbb{R}^4} v_n^4,
\end{align*}
then
\begin{multline*}
\left[\frac{1}{\sqrt{S}}\left(\int_{\mathbb{R}^4} u_n^4\right)^{1/4}\right]^2 
+ \frac{1}{3} \left[\frac{1}{\sqrt{S}}\left(\int_{\mathbb{R}^4} v_n^4\right)^{1/4}\right]^2\\
\leq 
\frac{4}{3}\left[\frac{1}{\sqrt{S}}\left(\int_{\mathbb{R}^4} u_n^4\right)^{1/4}\right]^3
\left[\frac{1}{\sqrt{S}}\left(\int_{\mathbb{R}^4} v_n^4\right)^{1/4}\right]
+\frac{\mu}{3}  \left[\frac{1}{\sqrt{S}}\left(\int_{\mathbb{R}^4} v_n^4\right)^{1/4}\right]^4.
\end{multline*}
Thus, by (\ref{232}) of Lemma \ref{Lempq_estimate} we get
\[
k_2^2 + \frac{1}{3} l_2^2
\leq
\frac{1}{S} \left[\left(\int_{\mathbb{R}^4} u_n^4\right)^{1/2}
+\frac{1}{3} \left(\int_{\mathbb{R}^4} v_n^4\right)^{1/2}\right].
\]
Therefore 
\begin{align*}
A'+o_n(1)=\J_0(u_n,v_n)= &
\frac{1}{4} \left( \int_{\mathbb{R}^4} |\nabla u_n|^2
+\frac{1}{3} \int_{\mathbb{R}^4} |\nabla v_n|^2 \right)
\geq 
\frac{S}{4} \left[\left(\int_{\mathbb{R}^4} u_n^4\right)^{1/2}
+\frac{1}{3} \left(\int_{\mathbb{R}^4} v_n^4\right)^{1/2}\right]\\
\geq & \frac{1}{4} \Big(k_2^2+\frac{1}{3}l_2^2\Big)S^2
\end{align*}
and thus
$$A'=\frac{1}{4}\Big(k_2^2+\frac{1}{3}l_2^2\Big)S^2.$$
\end{proof}

Proceeding as in the proof of Theorem \ref{PropAandA} and applying (\ref{241}) of Proposition \ref{pr:stl} and (\ref{231}) of Lemma \ref{Lempq_estimate} we can prove
\begin{Th}\label{cor0}
If $\mu=0$, then, for every $\varepsilon>0$ and $y\in\mathbb{R}^4$, we have that $(U_{\eps,y},U_{\eps,y})$ is a ground state solution of \eqref{eq2bis} and
\[
\J_0(U_{\eps,y},U_{\eps,y})
=A=A'
=\frac{S^2}{3}.
\]
\end{Th}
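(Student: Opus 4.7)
The plan is to mirror the proof of Theorem \ref{PropAandA}, with the simplifications provided by the case $\mu=0$; the role played there by $(k_2,l_2)$ is now taken by the unique positive solution $(1,1)$ of \eqref{eq_pq} at $\mu=0$.

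First I would verify directly that $(U_{\eps,y},U_{\eps,y})\in\mathcal{N}_0$ and compute its energy. Since $N=4$ gives $2^*=4$ and $2^*-3=1$, both equations of \eqref{eq2bis} evaluated at $u=v=U_{\eps,y}$ (with $\mu=0$) reduce to $-\Delta U_{\eps,y}=U_{\eps,y}^3$, which is precisely \eqref{eq:lim3}. Checking that $\mathbf{G}_0(U_{\eps,y},U_{\eps,y})=(0,0)$ is then immediate from $\int_{\R^4}|\nabla U_{\eps,y}|^2=\int_{\R^4}U_{\eps,y}^4=S^2$, and the same identities give $\J_0(U_{\eps,y},U_{\eps,y})=\bigl(\tfrac12+\tfrac16-\tfrac13\bigr)S^2=\tfrac{S^2}{3}$. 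This yields the upper bound $A'\leq A\leq S^2/3$.

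For the reverse inequality, I would pick a minimizing sequence $\{(u_n,v_n)\}\subset\mathcal{N}'_0$ with $\J_0(u_n,v_n)\to A'$. By (\ref{241}) of Proposition \ref{pr:stl}, $\mathcal{N}_0'$ contains no semitrivial pairs when $\mu=0$, so $u_n\neq 0$ and $v_n\neq 0$. Using $H_0(u_n,v_n)=0$ to eliminate the cubic cross term from $\J_0$ (the same manipulation used in Theorem \ref{PropAandA}) reduces the energy on $\mathcal{N}'_0$ to
\[
\J_0(u_n,v_n)=\frac{1}{4}\left(\int_{\R^4}|\nabla u_n|^2+\frac{1}{3}\int_{\R^4}|\nabla v_n|^2\right).
\]
Combining the Sobolev inequality \eqref{bestsob} applied to $u_n$ and $v_n$ with the Hölder bound $\int_{\R^4} u_n^3v_n\leq\|u_n\|_4^{3}\|v_n\|_4$ and with the constraint $H_0(u_n,v_n)=0$ gives, for $\alpha_n:=S^{-1/2}\|u_n\|_4$ and $\beta_n:=S^{-1/2}\|v_n\|_4$, the inequality $\alpha_n^2+\tfrac13\beta_n^2\leq\tfrac43\alpha_n^3\beta_n$, i.e.\ \eqref{eq:n10} with $\mu=0$.

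At this point (\ref{231}) of Lemma \ref{Lempq_estimate} yields $\alpha_n^2+\tfrac13\beta_n^2\geq\tfrac43$, and applying \eqref{bestsob} once more to bound each gradient term below gives
\[
\J_0(u_n,v_n)\geq\frac{S^2}{4}\left(\alpha_n^2+\frac{1}{3}\beta_n^2\right)\geq\frac{S^2}{3}.
\]
Passing to the limit in $n$ produces $A'\geq S^2/3$, which together with the upper bound closes the chain and proves $A=A'=S^2/3$, with $(U_{\eps,y},U_{\eps,y})$ realizing the common value. There is no genuine obstacle here beyond faithfully transcribing the $\mu>0$ algebra of Theorem \ref{PropAandA}; the only care point is the transition from the Sobolev/Hölder estimate to the scale-free inequality \eqref{eq:n10}, which is precisely what makes Lemma \ref{Lempq_estimate} the right tool.
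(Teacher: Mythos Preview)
Your proof is correct and follows exactly the approach the paper indicates: it reproduces the argument of Theorem~\ref{PropAandA} with $(k_2,l_2)$ replaced by $(1,1)$, invoking part~(\ref{241}) of Proposition~\ref{pr:stl} to exclude semitrivial pairs and part~(\ref{231}) of Lemma~\ref{Lempq_estimate} for the lower bound, precisely as the paper prescribes.
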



Moreover we have
\begin{Th}\label{ThSecodInterval}
If $\mu\in[\sqrt{6}/9,2\sqrt{3}/9]$, then for every $\varepsilon>0$ and $y\in\mathbb{R}^4$, 
$\left(0,\frac{1}{\sqrt{\mu}} U_{\varepsilon,y}\right)$ is a ground state solution of  \eqref{eq2bis} and 
\[
\J_0\left(0,\frac{1}{\sqrt{\mu}} U_{\varepsilon,y}\right)
=A'=A =\frac{1}{12\mu}S^2.
\]
Moreover if $\mu\in(\sqrt{6}/9,2\sqrt{3}/9]$, then any minimizer $(u,v)$ of $\J_0$ on $\mathcal{N}_0'$ is semitrivial, i.e. $u=0$.
\end{Th}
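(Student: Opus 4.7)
The plan is to mirror the proof of Theorem \ref{PropAandA}, but exploiting parts (\ref{232bis}) and (\ref{233}) of Lemma \ref{Lempq_estimate} in place of part (\ref{232}). In the present range of $\mu$ the curve defined by \eqref{eq:n10} no longer has an interior minimum for $k^2 + l^2/3$; the infimum $1/(3\mu)$ is instead realised on the semitrivial boundary $k=0$, $l^2 = 1/\mu$, which is precisely where the pair $(0, \mu^{-1/2} U_{\varepsilon,y})$ lives.

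For the upper bound, a direct computation using $-\Delta U_{\varepsilon,y} = U_{\varepsilon,y}^3$ and $\int_{\R^4}|\nabla U_{\varepsilon,y}|^2 = \int_{\R^4} U_{\varepsilon,y}^4 = S^2$ shows that $(0, \mu^{-1/2} U_{\varepsilon,y})$ is a critical point of $\J_0$ and belongs to $\mathcal{N}_0 \subset \mathcal{N}_0'$, with
\[
\J_0\!\left(0, \mu^{-1/2} U_{\varepsilon,y}\right) = \tfrac{1}{6\mu}S^2 - \tfrac{1}{12\mu}S^2 = \tfrac{S^2}{12\mu},
\]
so $A' \leq A \leq S^2/(12\mu)$. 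For the matching lower bound I would fix an arbitrary $(u,v) \in \mathcal{N}_0'$ and argue by cases. If $u \equiv 0$, then $v \not\equiv 0$ and the constraint $H_0(0,v)=0$ combined with Sobolev yields $\|\nabla v\|_2^2 \geq S^2/\mu$, whence $\J_0(0,v) = \tfrac{1}{12}\|\nabla v\|_2^2 \geq S^2/(12\mu)$. If $u \not\equiv 0$, then $v \not\equiv 0$ as well (otherwise $H_0(u,0) = \|\nabla u\|_2^2 = 0$), and repeating the Sobolev/H\"older chain from the proof of Theorem \ref{PropAandA} shows that $\tilde k := S^{-1/2}\|u\|_4$ and $\tilde l := S^{-1/2}\|v\|_4$ are strictly positive and satisfy \eqref{eq:n10}. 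Part (\ref{232bis}) (when $\mu = \sqrt{6}/9$) or part (\ref{233}) (when $\mu \in (\sqrt{6}/9, 2\sqrt{3}/9]$) of Lemma \ref{Lempq_estimate} then gives $\tilde k^2 + \tilde l^2/3 \geq 1/(3\mu)$, and
\[
\J_0(u,v) = \tfrac{1}{4}\bigl(\|\nabla u\|_2^2 + \tfrac{1}{3}\|\nabla v\|_2^2\bigr) \geq \tfrac{S^2}{4}\bigl(\tilde k^2 + \tfrac{1}{3}\tilde l^2\bigr) \geq \tfrac{S^2}{12\mu}.
\]
Passing to the infimum yields $A' \geq S^2/(12\mu)$, and combined with the previous bounds we obtain $A = A' = S^2/(12\mu)$, attained at $(0, \mu^{-1/2} U_{\varepsilon,y})$.

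For the final assertion, fix $\mu \in (\sqrt{6}/9, 2\sqrt{3}/9]$ and suppose, for contradiction, that a minimizer $(u,v) \in \mathcal{N}_0'$ has $u \not\equiv 0$. The argument above produces $v \not\equiv 0$, positive $\tilde k,\tilde l$ satisfying \eqref{eq:n10}, and this time part (\ref{233}) of Lemma \ref{Lempq_estimate} upgrades the estimate to the \emph{strict} inequality $\tilde k^2 + \tilde l^2/3 > 1/(3\mu)$. Then $\J_0(u,v) > S^2/(12\mu) = A'$, contradicting minimality; hence $u \equiv 0$. The one genuinely delicate point of the whole argument is this strictness: at the endpoint $\mu = \sqrt{6}/9$ part (\ref{232bis}) only furnishes the non-strict bound, which is precisely why the ``moreover'' statement must exclude that value of $\mu$.
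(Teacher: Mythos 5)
Your proof is correct and follows essentially the same route as the paper: verify that $(0,\mu^{-1/2}U_{\varepsilon,y})$ lies in $\mathcal{N}_0$ with $\J_0$-value $S^2/(12\mu)$, then split into the cases $u\equiv 0$ (where the constraint plus Sobolev gives the bound directly) and $u\not\equiv 0$ (where the Sobolev/H\"older chain from Theorem~\ref{PropAandA} feeds the normalized $L^4$-norms into parts~(\ref{232bis}) and~(\ref{233}) of Lemma~\ref{Lempq_estimate}), and finally use the strict inequality from part~(\ref{233}) to rule out nontrivial minimizers when $\mu>\sqrt{6}/9$. The only cosmetic difference is that you estimate an arbitrary element of $\mathcal{N}_0'$ rather than a minimizing sequence, which is a marginally cleaner way to get the same lower bound on $A'$.
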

\begin{proof}
It is simple to verify that $\left(0,\frac{1}{\sqrt{\mu}} U_{\varepsilon,y}\right)$ solves \eqref{eq2bis} and
\[
\J_0\left(0,\frac{1}{\sqrt{\mu}} U_{\varepsilon,y}\right)
=\frac{1}{12\mu}S^2.
\] 
Thus
\[
A'\leq A \leq \frac{1}{12\mu}S^2.
\]
Now we consider a minimizing sequence Let $\{(u_n,v_n)\}\subset\mathcal{N}_0'$ (such that $\J_0(u_n,v_n)\to A'$) and we distinguish two cases: if $u_n\neq 0$ we can proceed as in the proof of Theorem  \ref{PropAandA} getting,
by (\ref{232bis}) and (\ref{233}) of Lemma \ref{Lempq_estimate},
\[
\frac{1}{3\mu}
\leq
\frac{1}{S} \left[\left(\int_{\mathbb{R}^4} u_n^4\right)^{1/2}
+\frac{1}{3} \left(\int_{\mathbb{R}^4} v_n^4\right)^{1/2}\right].
\]
Thus
\begin{equation}
\label{bound}
\J_0(u_n,v_n) \geq \frac{1}{12\mu}S^2.
\end{equation}
If $u_n=0$, since $(0,v_n)\in \mathcal{N}_0'$ and $v_n$ satisfies \eqref{bestsob}, we obtain
\[
\|v_n\|_4^2 \geq \frac{1}{\mu}S
\]
and so the estimate \eqref{bound} holds too.
Thus the first part of the statement is proved.\\ 
Finally, suppose by contradiction that there exists a minimizer $(u,v)\in\mathcal{N}_0'$ of $\J_0$ on $\mathcal{N}_0'$ with $u\neq 0$. Then $v\neq 0$ and similarly as above, 
by (\ref{233}) of Lemma \ref{Lempq_estimate},
\[
\frac{1}{3\mu}
<
\frac{1}{S} \left[\left(\int_{\mathbb{R}^4} u^4\right)^{1/2}
+\frac{1}{3} \left(\int_{\mathbb{R}^4} v^4\right)^{1/2}\right].
\]
Thus
\begin{equation*}
A'
=\J_0(u,v) 
\geq
\frac{S}{4} \left[\left(\int_{\mathbb{R}^4} u^4\right)^{1/2}
+\frac{1}{3} \left(\int_{\mathbb{R}^4} v^4\right)^{1/2}\right]
> \frac{1}{12\mu}S^2=A'
\end{equation*}
and we get a contradiction.
\end{proof}

Finally, by (\ref{232bis}) of Lemma \ref{Lempq_estimate} and Theorem \ref{ThSecodInterval} we have
\begin{Th}
\label{corupp}
If $\mu=\sqrt{6}/9$, then, for every $\varepsilon>0$ and $y\in\mathbb{R}^4$, we have that $(k_2U_{\eps,y},l_2U_{\eps,y})$
is a nontrivial ground state of \eqref{eq2bis}.  
\end{Th}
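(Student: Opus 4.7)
The plan is to exhibit $(k_2U_{\varepsilon,y},l_2U_{\varepsilon,y})$ as a critical point of $\J_0$ and then match its energy with the ground state level $A$ already computed in Theorem \ref{ThSecodInterval}. First I would verify directly that the pair solves \eqref{eq2bis}. Because $U_{\varepsilon,y}$ solves $-\Delta U_{\varepsilon,y}=U_{\varepsilon,y}^{2^*-1}=U_{\varepsilon,y}^3$ in $\R^4$, substituting $u=k_2 U_{\varepsilon,y}$ and $v=l_2U_{\varepsilon,y}$ reduces the system to the algebraic conditions $k_2=k_2^2 l_2$ and $l_2=\mu l_2^3+k_2^3$, which are exactly \eqref{eq_pq} with the solution $(k_2,l_2)$ provided by Lemma \ref{Lempq} (since $\sqrt{6}/9<2\sqrt{3}/9$, the system has two positive solutions and we pick the one with $k_1<k_2$). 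Hence $(k_2U_{\varepsilon,y},l_2U_{\varepsilon,y})\in\mathcal{N}_0\subset\mathcal{N}_0'$.

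Next I would compute its energy via the usual Nehari reduction. On $\mathcal{N}_0$, both constraints in $\mathbf{G}_0(u,v)=0$ hold, so
\[
\J_0(u,v)=\frac{1}{4}\left(\int_{\R^4}|\nabla u|^2+\frac{1}{3}\int_{\R^4}|\nabla v|^2\right),
\]
exactly as in the proof of Theorem \ref{PropAandA}. Using $\int_{\R^4}|\nabla U_{\varepsilon,y}|^2=S^2$, this yields
\[
\J_0(k_2U_{\varepsilon,y},l_2U_{\varepsilon,y})=\frac{1}{4}\Big(k_2^2+\frac{1}{3}l_2^2\Big)S^2.
\]

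Now I invoke part (\ref{232bis}) of Lemma \ref{Lempq_estimate}, which, applied at $\mu=\sqrt{6}/9$, gives the identity $k_2^2+\frac{1}{3}l_2^2=\frac{1}{3\mu}$. Plugging this into the energy formula yields
\[
\J_0(k_2U_{\varepsilon,y},l_2U_{\varepsilon,y})=\frac{1}{12\mu}S^2,
\]
which is precisely the ground state level $A=A'$ established in Theorem \ref{ThSecodInterval} for $\mu\in[\sqrt{6}/9,2\sqrt{3}/9]$. Therefore $(k_2U_{\varepsilon,y},l_2U_{\varepsilon,y})$ attains the infimum on $\mathcal{N}_0$ and is, by construction, a nontrivial (both components strictly positive) ground state solution.

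The only real subtlety is that at $\mu=\sqrt{6}/9$ one also has the semitrivial ground state $(0,\mu^{-1/2}U_{\varepsilon,y})$ from Theorem \ref{ThSecodInterval}, so the minimizer is \emph{not} unique; the sharp inequality in part (\ref{233}) of Lemma \ref{Lempq_estimate} fails at the borderline value $\sqrt{6}/9$, which is exactly why both a semitrivial and a nontrivial ground state coexist. No further obstacle appears: everything is a direct consequence of the algebraic identities in Lemma \ref{Lempq_estimate}\,(\ref{232bis}) and the energy computation already performed for Theorem \ref{ThSecodInterval}.
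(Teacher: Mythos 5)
Your proof is correct and follows exactly the argument the paper relies on: verify that $(k_2 U_{\varepsilon,y}, l_2 U_{\varepsilon,y})$ lies on $\mathcal{N}_0$ via system \eqref{eq_pq}, compute its energy as $\tfrac14(k_2^2+\tfrac13 l_2^2)S^2$, and use the identity $k_2^2+\tfrac13 l_2^2 = \tfrac{1}{3\mu}$ from Lemma~\ref{Lempq_estimate}\,(\ref{232bis}) to match the level $A=\tfrac{1}{12\mu}S^2$ established in Theorem~\ref{ThSecodInterval}. The paper states this as an immediate consequence of those two results; you have simply supplied the details, and your closing observation about the coexistence of the semitrivial minimizer at the borderline value $\mu=\sqrt 6/9$ is a correct reading of why part~(\ref{233}) of Lemma~\ref{Lempq_estimate} only begins strictly above that threshold.
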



\subsection{The limit problem for $N\geq 5$} \label{subs22}

In this subsection we study the limit problem for a general $N\geq 5$. We notice that in the previous subsection the key points consist of the existence of a zero of the function $f$ in \eqref{f} (to prove that $\mathcal{N}_0$ is nonempty), the solutions of the system \eqref{eq_pq}, the condition \eqref{eq:n10}, and  the global minimum of the function $\psi$ in \eqref{eq:psi}. For a general $N$, the mentioned issues take the following form
\begin{equation}
\label{fN}
f_N(m)=m^{2^*-1}-m^{2^*-3}+\mu,
\quad
m>0,
\end{equation}
\begin{equation}\label{eq_pqN}
\left\{
  \begin{array}{lcl}
    k^{2^*-3}l=1,\\
    \mu l^{2^*-1}+k^{2^*-1}=l\\
    k,l>0,
  \end{array}
\right.
\end{equation}
\begin{equation}\label{eq:n10N}
k^2 + \frac{1}{2^*-1} l^2 \leq \frac{2^*}{2^*-1} k^{2^*-1} l +
\frac{\mu}{2^*-1} l^{2^*},
\end{equation}
and 
\begin{equation}
\label{eq:psiN}
\psi_N(m)= \frac{((2^*-1)m^2+1)^{\frac{2^*}{2^*-2}}}{(2^*-1)(2^*m^{2^*-1}+\mu )^{\frac{2}{2^*-2}}}
\quad
m>0.
\end{equation}
If $N=5$, the function $f_5$ in \eqref{fN} has the same geometry of the function $f$ in \eqref{f}. Thus we can repeat the similar arguments used in the Subsection \ref{subs21} and we have
\begin{Th}\label{5lim}
There exists $\mu^*\in(0,6/(7\sqrt[6]{7}))$ such that:
\begin{itemize}
\item if $\mu=0$, then, for every $\varepsilon>0$ and $y\in\mathbb{R}^5$, we have that $(U_{\eps,y},U_{\eps,y})$ is a ground state solution of \eqref{eq2bis} and
\[
\J_0(U_{\eps,y},U_{\eps,y})
=A=A'
=\frac{2}{7}S^{5/2};
\]
\item if $\mu\in(0,\mu^*)$, then, for every $\varepsilon>0$ and $y\in\mathbb{R}^5$, we have that $(k_2U_{\eps,y},l_2U_{\eps,y})$
is a ground state solution of \eqref{eq2bis} and  
$$\J_0(k_2U_{\eps,y},l_2U_{\eps,y})=A=A'=\frac{1}{5}\Big(k_2^2+\frac{3}{7}l_2^2\Big)S^{5/2}$$
where $(k_i,l_i)$ are the solutions of the system \eqref{eq_pqN}, $k_1<k_2$ and $l_2<l_1$;
\item if $\mu=\mu^*$, then, for every $\varepsilon>0$ and $y\in\mathbb{R}^5$, we have that $(k_2U_{\eps,y},l_2U_{\eps,y})$
is a nontrivial ground state of \eqref{eq2bis};
\item  if $\mu\in[\mu^*,6/(7\sqrt[6]{7})]$, then for every $\varepsilon>0$ and $y\in\mathbb{R}^5$, 
$\left(0,\frac{1}{\mu^{3/4}} U_{\varepsilon,y}\right)$ is a ground state solution of  \eqref{eq2bis} and 
\[
\J_0\left(0,\frac{1}{\mu^{3/4}} U_{\varepsilon,y}\right)
=A'=A =\frac{3}{35\mu^{3/2}}S^{5/2}.
\]
Moreover if $\mu\in(\sqrt{6}/9,2\sqrt{3}/9]$, then any minimizer $(u,v)$ of $\J_0$ on $\mathcal{N}_0'$ is semitrivial, i.e. $u=0$.
\end{itemize}
\end{Th}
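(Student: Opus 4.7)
The plan is to imitate the treatment of $N=4$ from Subsection \ref{subs21} step by step, substituting the $N=5$ analogues $f_5$, \eqref{eq_pqN}, \eqref{eq:n10N} and $\psi_5$ introduced at the start of this subsection. The functional-analytic machinery is unchanged; only the elementary one-variable analysis has to be redone.

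First I would examine $f_5(m)=m^{7/3}-m^{1/3}+\mu$ on $(0,\infty)$. Its derivative vanishes only at $m_\star=1/\sqrt{7}$, a strict global minimum at which $f_5(m_\star)=\mu-6/(7\sqrt[6]{7})$. Hence $f_5$ has two simple positive zeros $m_1<m_\star<m_2$ for $\mu\in(0,6/(7\sqrt[6]{7}))$, a double zero at $m_\star$ for $\mu=6/(7\sqrt[6]{7})$, and the unique positive zero $m_2=1$ for $\mu=0$. Repeating Lemmas \ref{le:n0nem} and \ref{Lempq} then gives $\mathcal{N}_0\ne\emptyset$ throughout $[0,6/(7\sqrt[6]{7})]$, and each positive zero $m_i$ of $f_5$ produces the solution $(k_i,l_i)=(m_i^{3/4},m_i^{-1/4})$ of \eqref{eq_pqN}.

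Next, given $(k,l)>0$ satisfying \eqref{eq:n10N}, I would contract along the ray $(tk,tl)$ to a pair $(\bar k,\bar l)$ lying on the equality curve with $\bar k\le k$ and $\bar l\le l$. Parametrizing this curve by the ratio $m=\bar k/\bar l$ gives $\bar k^2+\tfrac{3}{7}\bar l^2=\psi_5(m)$. A short calculation shows that the critical points of $\psi_5$ on $(0,\infty)$ are precisely the positive zeros of $f_5$, so $m_2$ is the global minimizer of $\psi_5$, with $\psi_5(m_2)=m_2^{3/2}+\tfrac{3}{7}m_2^{-1/2}$, whereas $\lim_{m\to 0^+}\psi_5(m)=3/(7\mu^{3/2})$ is the value corresponding to the limiting semitrivial branch. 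I would define $\mu^\ast\in(0,6/(7\sqrt[6]{7}))$ as the unique value at which $\psi_5(m_2(\mu))=3/(7\mu^{3/2})$; its existence follows from continuity together with the asymptotics $\psi_5(m_2)\to 10/7$ as $\mu\to 0^+$ (while $3/(7\mu^{3/2})\to +\infty$) and $\psi_5(m_2)>3/(7\mu^{3/2})$ at $\mu=6/(7\sqrt[6]{7})$ (a direct check), whereas uniqueness rests on a monotonicity argument in $\mu$.

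With these pieces in place, the four bullets follow just as in the proofs of Theorems \ref{PropAandA}, \ref{cor0}, \ref{ThSecodInterval} and \ref{corupp}. For any minimizing sequence $(u_n,v_n)$ in $\mathcal{N}_0'$ the Sobolev inequality \eqref{bestsob} combined with the $N=5$ analogue of Lemma \ref{Lempq_estimate} yields $\J_0(u_n,v_n)\ge \tfrac{1}{5}\psi_5(m_2)S^{5/2}$ when $u_n\ne 0$ and $\J_0(u_n,v_n)\ge \tfrac{3}{35\mu^{3/2}}S^{5/2}$ when $u_n=0$; the explicit candidates $(k_2U_{\eps,y},l_2U_{\eps,y})$ and $(0,\mu^{-3/4}U_{\eps,y})$ saturate the respective bounds, so the sign of $\psi_5(m_2(\mu))-3/(7\mu^{3/2})$ decides which family realizes the ground state level. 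The main obstacle is the implicit definition of $\mu^\ast$: in contrast with the $N=4$ case, where $\mu^\ast=\sqrt{6}/9$ comes out in closed form, here $m_2(\mu)$ is only given implicitly through $f_5=0$, so showing that $\mu\mapsto \psi_5(m_2(\mu))-3/(7\mu^{3/2})$ changes sign exactly once on $(0,6/(7\sqrt[6]{7}))$ requires a careful implicit-function/monotonicity analysis. The remaining steps, including the final assertion that every minimizer is semitrivial for $\mu\in(\mu^\ast,6/(7\sqrt[6]{7})]$, are routine adaptations of the corresponding $N=4$ arguments.
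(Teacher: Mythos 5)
Your proposal is correct and matches the paper's approach, which (in a single sentence) says that since $f_5$ has the same geometry as $f$ for $N=4$, the arguments of Subsection~\ref{subs21} carry over; you simply carry them out explicitly. The monotonicity you flag as the main potential obstacle is in fact routine: substituting $\mu=m^{1/3}(1-m^2)$ (from $f_5(m)=0$) turns the threshold equation $\psi_5(m_2(\mu))=\tfrac{3}{7}\mu^{-3/2}$ into $\bigl(m^2+\tfrac{3}{7}\bigr)(1-m^2)^{3/2}=\tfrac{3}{7}$ on $(1/\sqrt{7},1)$, and the left-hand side has derivative $5m(1-m^2)^{1/2}\bigl(\tfrac{1}{7}-m^2\bigr)<0$ there, giving a unique $\mu^*$; note also that the interval in the last bullet of the statement should read $(\mu^*,6/(7\sqrt[6]{7})]$, as you implicitly correct, the printed $(\sqrt{6}/9,2\sqrt{3}/9]$ being a leftover from $N=4$.
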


We notice that, the upper bound on $\mu$ to obtain that $\mathcal{N}_0\neq\emptyset$ for $N=4,5$ is given by $\mu_N$ in \eqref{muN}. For $N\geq 6$ the geometry of function $f_N$ is different and allows us to prove the following results.

\begin{Th}\label{6lim}
If $N=6$ and $\mu\in[0,1]$ then, for every $\varepsilon>0$ and $y\in\mathbb{R}^6$, we have that $(\sqrt{1-\mu}U_{\eps,y},U_{\eps,y})$ is a ground state solution of \eqref{eq2bis} and  
$$\J_0(\sqrt{1-\mu}U_{\eps,y},U_{\eps,y})=A=A'=\frac{1}{6}\left(\frac{3}{2}-\mu\right)S^3.$$
\end{Th}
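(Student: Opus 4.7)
The plan is to exploit the special structure when $N=6$ (so $2^\ast = 3$), which makes the nonlinearities purely quadratic. Under this specialization the system \eqref{eq2bis} reads $-\Delta u = uv$ and $-\Delta v = \mu v^2 + u^2$, and the pair $(\sqrt{1-\mu}\,U_{\eps,y}, U_{\eps,y})$ is an explicit solution: since $-\Delta U_{\eps,y} = U_{\eps,y}^2$, plugging into both equations is immediate. In particular this pair lies in $\mathcal{N}_0$, and using $\int|\nabla U_{\eps,y}|^2 = \int U_{\eps,y}^3 = S^3$ the functional evaluates to $\J_0(\sqrt{1-\mu}\,U_{\eps,y},U_{\eps,y}) = \frac{1}{6}\bigl(\frac{3}{2}-\mu\bigr)S^3$. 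Combined with the general inequality $A' \leq A$, this furnishes the upper bound on both levels.

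For the matching lower bound I would work on $\mathcal{N}_0'$. A direct algebraic combination (for instance $\J_0 - \frac{1}{3}H_0$) shows that for $(u,v) \in \mathcal{N}_0'$,
$$\J_0(u,v) = \frac{1}{6}\int_{\R^6}|\nabla u|^2 + \frac{1}{12}\int_{\R^6}|\nabla v|^2,$$
so minimizing $\J_0$ on $\mathcal{N}_0'$ is equivalent to minimizing $Q(u,v) := \int|\nabla u|^2 + \frac{1}{2}\int|\nabla v|^2$ over the constraint $H_0 = 0$. Bounding the nonlinear side of $H_0$ by H\"older ($\int u^2 v \leq \|u\|_3^2\|v\|_3$) and Sobolev \eqref{bestsob}, and rescaling $a := \|\nabla u\|_2^2/S^3$, $t := \|\nabla v\|_2/S^{3/2}$, the constraint forces the scalar inequality $a(2-3t) \leq \mu t^3 - t^2$; the goal reduces to showing $a + t^2/2 \geq (3-2\mu)/2$.

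I would then split according to the sign of $\mu t - 1$. When $\mu t \geq 1$ (necessarily $t \geq 1$), the scalar constraint is automatic, so $a$ can be zero and the best available lower bound on $a + t^2/2$ is $t^2/2 \geq 1/(2\mu^2)$; the required inequality $1/(2\mu^2) \geq (3-2\mu)/2$ is equivalent to $(\mu-1)^2(2\mu+1) \geq 0$, which is exactly what the hypothesis $\mu \leq 1$ accommodates. When $\mu t < 1$, one first rules out $t \leq 2/3$ (as it would force $a = 0 = t$, contradicting $(u,v) \neq (0,0)$); hence $t > 2/3$, the constraint gives $a \geq t^2(1-\mu t)/(3t-2)$, and a short simplification reduces the goal to the elementary polynomial inequality $t^3 - 3t + 2 = (t-1)^2(t+2) \geq 0$, with equality exactly at $t=1$, $a=1-\mu$ --- matching the explicit instanton pair.

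The technical heart of the argument lies in this factorization $(t-1)^2(t+2) \geq 0$, which simultaneously pinpoints the minimizer and collapses the two-variable case analysis to a one-dimensional polynomial check. I expect the main obstacle to be confirming that the semi-trivial competitor branch ($\mu t \geq 1$, corresponding to $(0,\mu^{-1}U_{\eps,y})$) does not undercut the explicit mixed candidate on the interval $[0,1]$; this is precisely what the sharp identity $(2\mu+1)(\mu-1)^2 \geq 0$ guarantees, with equality only at the endpoint $\mu = 1$, where both branches coincide and the ground state degenerates to a semi-trivial one.
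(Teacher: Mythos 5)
Your proof is correct and follows the same strategy the paper sketches for this case: exhibit $(\sqrt{1-\mu}\,U_{\eps,y},U_{\eps,y})\in\mathcal{N}_0$ for the upper bound, then on $\mathcal{N}_0'$ write $\J_0=\frac16\|\nabla u\|_2^2+\frac1{12}\|\nabla v\|_2^2$, reduce $H_0=0$ to a scalar inequality via H\"older and Sobolev, and minimize. The only (cosmetic) difference is in the one-dimensional optimization: the paper invokes the slope parametrization $\psi_N(m)$ of Lemma \ref{Lempq_estimate}, whereas you split on the sign of $\mu t-1$ and obtain the same bound from the clean factorizations $(3-2\mu)(t-1)^2(t+2)\ge0$ and $(\mu-1)^2(2\mu+1)\ge0$, which also makes the $\mu\le1$ threshold and the degeneration to the semitrivial competitor at $\mu=1$ transparent.
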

\begin{proof}
In this case it is easy to prove that, if $u\in \mathcal{D}^{1,2}(\mathbb{R}^6)$, $u>0$, then $(\sqrt{1-\mu}u,u)\in\mathcal{N}_0$ and the couple $(\sqrt{1-\mu},1)$ is the unique solution of the system \eqref{eq_pqN}. Thus, arguing as in Lemma \ref{Lempq_estimate} we can prove that \eqref{eq:n10N} implies
\[
k^2+\frac{1}{2} l^2 \geq \frac{3}{2} - \mu.
\]
Hence, similarly as in Theorem \ref{PropAandA} we conclude.
\end{proof}

\begin{Th}\label{7lim}
If $N\geq 7$ and $\mu\geq 0$ then, for every $\varepsilon>0$ and $y\in\mathbb{R}^N$, we have that
$(\bar m^{\frac{1}{2^*-2}}U_{\eps,y}, \bar m^{\frac{{3-2^*}}{2^*-2}}U_{\eps,y})$ is a ground state solution of \eqref{eq2bis} and  
$$\J_0(\bar m^{\frac{1}{2^*-2}}U_{\eps,y}, \bar m^{\frac{{3-2^*}}{2^*-2}}U_{\eps,y})=A=A'=\frac{1}{N}\left(\tilde{k}^2 + \frac{1}{2^*-1} \tilde{l}^2\right)S^{N/2}$$
where $(\tilde{k},\tilde{l})$ is the unique solution of system \eqref{eq_pqN}.
\end{Th}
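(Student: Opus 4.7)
The plan mirrors the proof of Theorems \ref{PropAandA} and \ref{6lim}, with the decisive simplification that for $N\ge 7$ the exponent $2^{*}-3=(6-N)/(N-2)$ is strictly negative, so that $f_N$ becomes strictly monotone. Indeed,
$$f_N'(m)=(2^{*}-1)m^{2^{*}-2}+(3-2^{*})m^{2^{*}-4}>0\quad\text{for all }m>0,$$
together with $f_N(0^{+})=-\infty$ and $f_N(+\infty)=+\infty$, forces a unique positive zero $\bar m$. The substitution $k=ml$ reduces \eqref{eq_pqN} to the single scalar equation $f_N(m)=0$, so the unique positive solution of \eqref{eq_pqN} is $(\tilde k,\tilde l)=(\bar m^{1/(2^{*}-2)},\bar m^{(3-2^{*})/(2^{*}-2)})$.

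Next I would verify that $(\tilde kU_{\eps,y},\tilde lU_{\eps,y})$ solves \eqref{eq2bis}: substituting into the system and using $-\Delta U_{\eps,y}=U_{\eps,y}^{2^{*}-1}$, the two equations collapse precisely to $\tilde k^{2^{*}-3}\tilde l=1$ and $\mu\tilde l^{2^{*}-1}+\tilde k^{2^{*}-1}=\tilde l$, i.e.\ to \eqref{eq_pqN}. Hence $(\tilde kU_{\eps,y},\tilde lU_{\eps,y})\in\mathcal{N}_0\subset\mathcal{N}'_0$. Using $\int|\nabla U_{\eps,y}|^{2}=\int U_{\eps,y}^{2^{*}}=S^{N/2}$ together with the identities $\tilde k^{2^{*}-1}\tilde l=\tilde k^{2}$ and $\mu\tilde l^{2^{*}}=\tilde l^{2}-\tilde k^{2}$ (immediate from \eqref{eq_pqN}), a direct computation regroups the four terms of $\J_0$ as
$$\J_0(\tilde kU_{\eps,y},\tilde lU_{\eps,y})=\frac{2^{*}-2}{2\cdot 2^{*}}\Big(\tilde k^{2}+\frac{1}{2^{*}-1}\tilde l^{2}\Big)S^{N/2}=\frac{1}{N}\Big(\tilde k^{2}+\frac{1}{2^{*}-1}\tilde l^{2}\Big)S^{N/2},$$
which yields the upper bound $A'\le A\le\frac{1}{N}(\tilde k^{2}+\tilde l^{2}/(2^{*}-1))S^{N/2}$.

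For the matching lower bound, I would work on $\mathcal{N}'_0$. The combination $\J_0-\frac{1}{2^{*}}H_0$ cancels the nonlinear terms and gives $\J_0(u,v)=\frac{1}{N}(\|\nabla u\|_2^{2}+\|\nabla v\|_2^{2}/(2^{*}-1))$ on $\mathcal{N}'_0$. Setting $K=S^{-(N-2)/4}\|u\|_{2^{*}}$ and $L=S^{-(N-2)/4}\|v\|_{2^{*}}$, the Sobolev inequality \eqref{bestsob} applied to both gradient terms gives $\J_0(u,v)\ge\frac{S^{N/2}}{N}(K^{2}+L^{2}/(2^{*}-1))$, while combining \eqref{bestsob} with H\"older's inequality in the identity $H_0(u,v)=0$ reproduces exactly \eqref{eq:n10N} for $(K,L)$. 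It thus remains to prove the $N\ge 7$ analogue of Lemma \ref{Lempq_estimate}: whenever $(K,L)$ with $L>0$ satisfies \eqref{eq:n10N},
$$K^{2}+\frac{1}{2^{*}-1}L^{2}\ge\tilde k^{2}+\frac{1}{2^{*}-1}\tilde l^{2}.$$
Following the projection trick of Lemma \ref{Lempq_estimate}, I would rescale $(K,L)\mapsto(tK,tL)$ with the unique $t\in(0,1]$ turning \eqref{eq:n10N} into equality (componentwise smaller, same ratio), parametrise the resulting equality curve by $m=tK/(tL)=K/L$, and observe that $(tK)^{2}+(tL)^{2}/(2^{*}-1)=\psi_N(m)$ with $\psi_N$ as in \eqref{eq:psiN}. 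A direct differentiation yields that $\psi_N'(m)$ equals $mf_N(m)$ up to a strictly positive factor, so by the first step $\psi_N$ has a single critical point at $\bar m$, which is a global minimum on $(0,+\infty)$ with $\psi_N(\bar m)=\tilde k^{2}+\tilde l^{2}/(2^{*}-1)$. The degenerate case $K=0$ (which forces $L^{2^{*}-2}\ge 1/\mu$ via \eqref{eq:n10N}) is handled by comparison with $\lim_{m\to 0^{+}}\psi_N(m)$, which majorises $\psi_N(\bar m)$ by monotonicity.

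The main obstacle is this final step: verifying that $\psi_N$ is minimised at $\bar m$ (rather than escaping to lower values at $m=0^{+}$) requires the strict monotonicity of $f_N$, and this is exactly where the hypothesis $N\ge 7$ enters. For $N=4,5$ the function $f_N$ is only unimodal and Lemma \ref{Lempq_estimate} had to distinguish several subranges of $\mu$, whereas for $N\ge 7$ the strict negativity of $2^{*}-3$ removes all branching and delivers a single ground state for every $\mu\ge 0$.
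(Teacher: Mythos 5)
Your proposal is correct and takes essentially the same route as the paper. The paper's proof of Theorem \ref{7lim} is deliberately terse, citing the earlier arguments ("arguing as in Theorem \ref{PropAandA}"); your writeup simply supplies those details explicitly: the strict monotonicity of $f_N$ from $2^*-3<0$, the parametrisation of the equality curve, the sign identity $\psi_N'(m) = m\,f_N(m)\times(\text{positive factor})$, and the Sobolev--Hölder reduction to \eqref{eq:n10N} for the rescaled norms $K,L$. All of these are the ingredients the paper implicitly invokes.
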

\begin{proof}
Take any $u\in \mathcal{D}^{1,2}(\R^N)$, $u> 0$. In this case the function $f_N$ in \eqref{fN} is stricly increasing and satisfies \[
\lim_{m\to 0^+} f(m)=-\infty
\qquad
\hbox{and}
\qquad
\lim_{m\to +\infty} f(m)=+\infty.
\]
Thus it admits a unique nontrivial zero $\bar m$ and then
\[
\left(
\left[\bar m \left(\int_{\R^N}|\nabla u|^2\right)  \left(\int_{\R^N} |u|^{2^*}\right)^{-1}\right]^{\frac{1}{2^*-2}} u,
\left[\bar m^{3-2^*} \left(\int_{\R^N}|\nabla u|^2\right)  \left(\int_{\R^N} |u|^{2^*}\right)^{-1}\right]^{\frac{1}{2^*-2}} u
\right)
\in \mathcal{N}_0
\]
and system \eqref{eq_pqN} has a unique solution 
\[
(\tilde{k},\tilde{l})=(\bar m^{\frac{1}{2^*-2}},
\bar m^{\frac{{3-2^*}}{2^*-2}}).
\]
As before we can prove that \eqref{eq:n10N} implies
\[
k^2 + \frac{1}{2^*-1} l^2 
\geq \tilde{k}^2 + \frac{1}{2^*-1} \tilde{l}^2
= \frac{\bar m^{\frac{{2(3-2^*)}}{2^*-2}}}{2^*-1}((2^*-1)\bar m^2 + 1)
\]
considering the function $\psi_N$ in \eqref{eq:psiN} which has a global minimum point at the zero of the function $f_N$. Hence, arguing as in Theorem \ref{PropAandA} we conclude.
\end{proof}

\section{Positive ground states for \eqref{eq}}\label{SectionProofMainTh}

In this section we investigate the existence of ground states for our problem \eqref{eq} and we prove our main result.
First of all we notice that $\mathcal{N}$, defined in \eqref{eq:defN}, is a $C^1$-manifold since
\[
\mathbf{G}'(u,v)[u,v]=\left((2-2^*) (\| \nabla u \|_2^2 -  \lambda \| u \|_2^2), (2-2^*) \| \nabla v \|_2^2 \right) \neq (0,0)
\]
for all $(u,v)\in \mathcal{N}$.
\begin{Lem}
\label{le:nnpty}
If $\lambda\in(0,\lambda_1(\Omega))$ 
and $\mu \in \mathbb{I}_N$, then $\mathcal{N}\neq\emptyset$.
\end{Lem}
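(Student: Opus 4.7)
The cleanest construction of an element of $\mathcal{N}$ uses the one-function ansatz $(su_0, tu_0)$ for a fixed positive $u_0\in H^1_0(\Omega)\setminus\{0\}$ (concretely, take $u_0=\varphi_1$, the positive first Dirichlet eigenfunction of $-\Delta$ on $\Omega$). With the shorthand $A=\|\nabla u_0\|_2^2-\lambda\|u_0\|_2^2$, $B=\|\nabla u_0\|_2^2$, $C=\|u_0\|_{2^*}^{2^*}$ (all positive, because $\lambda<\lambda_1(\Omega)$), the condition $\mathbf{G}(su_0,tu_0)=(0,0)$ becomes
\[
s^2A=s^{2^*-1}t\,C,\qquad t^2B=\mu t^{2^*}C+s^{2^*-1}t\,C.
\]
The first equation yields $t=(A/C)\,s^{3-2^*}$; substituting into the second and changing variable to $y:=(C/A)\,s^{2^*-2}$, after dividing by a common power of $A$ everything collapses into the single scalar equation
\[
P(y):=y^{2^*-1}-\beta\,y^{2^*-3}+\mu=0,\qquad \beta:=B/A,
\]
with the crucial feature that $\lambda>0$ and $u_0\not\equiv0$ force the \emph{strict} inequality $\beta>1$. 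The lemma thus reduces to exhibiting a positive root of $P$.

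I would verify this case by case. For $N\geq 7$ one has $3-2^*>0$, so $P(y)=0$ rewrites as $y^2+\mu\,y^{3-2^*}=\beta$, whose left-hand side runs continuously from $0$ to $+\infty$ on $(0,\infty)$; any $\mu\geq 0$ is handled by the intermediate value theorem. For $N=6$ the equation collapses to $y^2=\beta-\mu$, solvable because $\mu\leq 1<\beta$. For $N\in\{4,5\}$, setting $p=2^*$, a direct calculation shows that $P$ is minimized on $(0,\infty)$ at $y_0=(\beta(p-3)/(p-1))^{1/2}$ with minimum value $\mu-\tfrac{2\beta}{p-1}\,y_0^{p-3}$, so a positive root exists iff
\[
\mu\leq \Phi_N(\beta):=\tfrac{2}{p-1}\,\beta^{(p-1)/2}\,\Bigl(\tfrac{p-3}{p-1}\Bigr)^{(p-3)/2}.
\]
Direct substitution gives $\Phi_N(1)=\mu_N$ (the constant in \eqref{muN}), and $\Phi_N$ is strictly increasing in $\beta$. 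Combined with $\beta>1$ and the strict inclusions $\sqrt{6}/9<\mu_4$ and $\mu^*<\mu_5$ (the latter by Theorem \ref{5lim}), the hypothesis $\mu\in\mathbb{I}_N$ yields $\mu\leq\mu_N\leq\Phi_N(\beta)$, which is exactly what is required.

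The main delicate point is the quantitative book-keeping for $N=4,5$: one must verify that $\mathbb{I}_N$ lies inside the limit-problem threshold $[0,\mu_N]$, and that the slack provided by $\beta>1$ is genuinely produced by the assumption $\lambda>0$. All other ingredients---the substitutions, the monotonicity of $\Phi_N$, and the elementary calculus on $P$---are routine, and no PDE-theoretic machinery beyond the definition of $\lambda_1(\Omega)$ is needed.
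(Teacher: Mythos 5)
Your proposal is correct and follows essentially the same route as the paper: project a single positive test function onto the Nehari manifold via the scalar ansatz $(su_0,tu_0)$, reduce the two Nehari equations to the one-variable equation $y^{2^*-1}-\sigma y^{2^*-3}+\mu=0$ with $\sigma=\|\nabla u_0\|_2^2/(\|\nabla u_0\|_2^2-\lambda\|u_0\|_2^2)>1$, and then exhibit a positive root by elementary calculus in each dimensional regime. The paper states the same reduction and the same three cases (the cubic-type argument for $N=4,5$ using $\sigma>1$, the trivial case $N=6$, and strict monotonicity of $f_N$ for $N\geq 7$) but leaves the computations implicit, whereas you carry them out in full with the threshold function $\Phi_N$; this is a difference in level of detail, not of method.
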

\begin{proof}
We proceed as before. Let us take  $u\in H^1_0(\Omega)$, $u > 0$ and $\bar{m}$ be a strictly positive solution of
\[
m^{2^*-1} - \sigma m^{2^*-3} + \mu = 0,
\qquad
\sigma:=\frac{\| \nabla u \|_2^2}{\| \nabla u \|_2^2  - \lambda \| u \|_2^2}
\]
whose existence  can be obtained arguing as in Lemma \ref{le:n0nem} and using that $\sigma >1$ for $N=4,5$, as in Theorem \ref{6lim} for $N=6$ or as in Theorem \ref{7lim} for $N\geq 7$. 
Then
\[
\left((\bar{m}\bar{\sigma})^\frac{1}{2^*-2}u,(\bar{m}^{3-2^*}\bar{\sigma})^\frac{1}{2^*-2}u\right)\in\mathcal{N},
\qquad
\bar{\sigma}:=\frac{\| \nabla u \|_2^2  - \lambda \| u \|_2^2}{\| u \|_{2^*}^{2^*}}.
\]
\end{proof}
We note that, if $N=4,5$, arguing in the same way, we can prove that $\mathcal{N}\neq\emptyset$ for $\mu\in[0,\mu_N]$, where $\mu_N$ is given by
\eqref{muN}.

Now, let 
\[
\mathcal{B}:=\inf_{w\in\Gamma}\max_{t\in[0,1]}\J(w(t))
\]
where $\Gamma:=\{w\in C([0,1],H^1_0(\Omega)\times H^1_0(\Omega))|\; w(0)=(0,0),\; \J(w(1))<0\}$. We have
\begin{Lem}\label{LemmaStep1}
If $\lambda>0$ and $\mu\in\mathbb{I}_N$, then $\mathcal{B}<A$. 
\end{Lem}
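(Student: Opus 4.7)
The plan is to exhibit an explicit element $w$ of $\Gamma$ built from truncated Aubin--Talenti instantons and to estimate $\max_{t\in[0,1]}\J(w(t))$ from above by $A$. From Section \ref{SectionLimitProb} (Theorems \ref{PropAandA}, \ref{cor0}, \ref{corupp}, \ref{5lim}, \ref{6lim} and \ref{7lim}), for every $\mu\in\mathbb{I}_N$ there exist $k,l\geq 0$, generically positive, such that $(kU_{\eps,y},lU_{\eps,y})$ realises the ground state level $A$ of $\J_0$; these constants satisfy the Nehari identities $k^{2^*-3}l=1$ and $\mu l^{2^*-1}+k^{2^*-1}=l$, which algebraically force
\[
\frac{1}{2^*-1}\bigl(\mu l^{2^*}+2^*k^{2^*-1}l\bigr)=k^2+\frac{l^2}{2^*-1}=:\alpha,\qquad A=\frac{1}{N}\alpha S^{N/2}.
\]
I would fix $y\in\Omega$, choose a cutoff $\varphi\in C_c^\infty(\Omega)$ with $\varphi\equiv 1$ near $y$, and set $u_\eps:=\varphi\,U_{\eps,y}\in H_0^1(\Omega)$.

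Along the ray $s\mapsto(sku_\eps,slu_\eps)$ the functional $\J$ takes the simple form $g(s):=\J(sku_\eps,slu_\eps)=\tfrac{a}{2}s^2-\tfrac{b}{2^*}s^{2^*}$ with
\[
a=\alpha\|\nabla u_\eps\|_2^2-\lambda k^2\|u_\eps\|_2^2,\qquad b=\alpha\|u_\eps\|_{2^*}^{2^*},
\]
after using the identity displayed above to collapse the two critical coefficients into a single $\alpha$. The elementary one-dimensional maximum formula then gives $\max_{s\geq 0}g(s)=\tfrac{1}{N}\,a^{N/2}/b^{(N-2)/2}$.

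Next, I would invoke the classical Brezis--Nirenberg expansions $\|\nabla u_\eps\|_2^2=S^{N/2}+O(\eps^{N-2})$ and $\|u_\eps\|_{2^*}^{2^*}=S^{N/2}+O(\eps^N)$, together with the dimension-dependent lower bound $\|u_\eps\|_2^2\gtrsim\eps^2|\log\eps|$ for $N=4$ and $\|u_\eps\|_2^2\gtrsim\eps^2$ for $N\geq 5$. A first-order Taylor expansion of $a^{N/2}/b^{(N-2)/2}$ yields
\[
\max_{s\geq 0}g(s)=A-\tfrac{1}{2}\lambda k^2\|u_\eps\|_2^2+O(\eps^{N-2}),
\]
and since in every dimension $N\geq 4$ the lower bound on $\|u_\eps\|_2^2$ is of strictly larger order than $\eps^{N-2}$ (by a $|\log\eps|$ factor when $N=4$ and by a positive power of $\eps$ when $N\geq 5$), the $\lambda$-term dominates for $\eps>0$ small, so $\max_{s\geq 0}g(s)<A$. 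To close the path I pick $R>0$ with $\J(Rku_\eps,Rlu_\eps)<0$ (possible since the critical terms dominate for large amplitudes), set $w(t):=(tRku_\eps,tRlu_\eps)$ for $t\in[0,1]$, and conclude $\mathcal{B}\leq\max_{t\in[0,1]}\J(w(t))\leq\max_{s\geq 0}g(s)<A$.

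The main obstacle is the boundary case $N=6$, $\mu=1$, where the limit ground state of $\J_0$ degenerates to the semitrivial pair $(0,U_{\eps,y})$ (so $k=0$) and the $\lambda k^2$ correction disappears from the one-parameter bound; the remaining error is only $O(\eps^4)$ of a priori ambiguous sign. In this delicate case I would allow $k$ to depend on $\eps$, perturbing the limit pair by a small positive first component, and carry out a sharper two-parameter optimisation, keeping explicit track of the constants in the resulting $O(\eps^4)$ remainder. The bookkeeping of signed corrections in the generic expansion, and this edge case, are the only points where more than the standard Brezis--Nirenberg template is needed.
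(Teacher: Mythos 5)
Your argument for the ``generic'' parameter values -- where the ground state of the limit problem has both components nonzero -- is essentially the paper's own proof: both pick truncated instantons $u_\eps=\chi U_{\eps,0}$, scale by the ground-state constants $(k,l)$, use the Nehari identity $\frac{1}{2^*-1}\bigl(\mu l^{2^*}+2^*k^{2^*-1}l\bigr)=k^2+\tfrac{l^2}{2^*-1}$ to collapse the two critical coefficients into a single $\alpha$, and invoke the Brezis--Nirenberg expansions together with the elementary formula $\max_s\bigl(\tfrac a2 s^2-\tfrac{b}{2^*}s^{2^*}\bigr)=\tfrac1N a^{N/2}b^{-(N-2)/2}$ to obtain $\max_s\J(sku_\eps,slu_\eps)=A-\tfrac12\lambda k^2\|u_\eps\|_2^2+O(\eps^{N-2})<A$ for $\eps$ small. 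The computations you sketch there are correct, and this is exactly how the paper proceeds.

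You have also correctly identified the delicate endpoint $N=6$, $\mu=1$. There $2^*=3$, the Nehari system forces $(k,l)=(\sqrt{1-\mu},1)$, hence $k=0$ at $\mu=1$, the ground state of the limit problem is the semitrivial pair $(0,U_{\eps,y})$, and the $\lambda k^2\|u_\eps\|_2^2$ correction disappears. The paper's proof does \emph{not} flag this: it begins ``Let $(k,l)\in\R^2$, $k,l>0$ such that $(kU_{\eps,y},lU_{\eps,y})$ is a ground state,'' which is vacuous precisely at $N=6$, $\mu=1$. So you have found a genuine oversight in the paper's treatment of this endpoint. By contrast, the other endpoints are safe: for $N=4$, $\mu=\sqrt6/9$, and $N=5$, $\mu=\mu^*$, Theorems \ref{corupp} and \ref{5lim} guarantee a \emph{nontrivial} ground state $(k_2,l_2)$ with $k_2,l_2>0$, and for $N\ge 7$ both components are always positive.

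Where your proposal has a real gap is the remedy you sketch for $N=6$, $\mu=1$. Perturbing to $(k_\eps U_\eps,U_\eps)$ with $k_\eps>0$ small and redoing the one-parameter maximisation, one gets, using $\alpha_\eps=k_\eps^2+\tfrac12$, $\beta_\eps=\tfrac12+\tfrac32 k_\eps^2$, and $\tfrac{\alpha_\eps^3}{\beta_\eps^2}=\tfrac12\bigl(1+3k_\eps^4+O(k_\eps^6)\bigr)$,
\[
\max_{s\ge0}\J(sk_\eps u_\eps,su_\eps)
= A+\tfrac14 S^3\,k_\eps^4-\tfrac12\lambda d_6\,k_\eps^2\eps^2+\tfrac14 c_1\eps^4+o(\eps^4),
\]
where $c_1>0$ is the gradient-cutoff constant in $\|\nabla u_\eps\|_2^2=S^3+c_1\eps^4+o(\eps^4)$ and $d_6>0$ comes from $\|u_\eps\|_2^2=d_6\eps^2+O(\eps^4)$. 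Note the quartic dependence on $k_\eps$: the two leading-order $k_\eps^2$ contributions in $\alpha_\eps^3/\beta_\eps^2$ cancel, so the ``gain'' from switching $k$ on is only $O(k_\eps^4)$. Writing $k_\eps^2=\gamma\eps^2$, the bracket becomes $\tfrac{\eps^4}{4}\bigl(S^3\gamma^2-2\lambda d_6\gamma+c_1\bigr)$, a quadratic in $\gamma$ with discriminant $4\lambda^2 d_6^2-4S^3c_1$; for $\lambda$ below the threshold $\sqrt{S^3c_1}/d_6$ no $\gamma>0$ makes this negative. Thus your ``small positive first component'' strategy does not close the gap for all $\lambda\in(0,\lambda_1(\Omega))$, and a genuinely different idea (a different test pair, or a sharper cutoff reducing $c_1$, or a separate argument for $\mu=1$) would be needed. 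Since you only promise ``bookkeeping'' there, this step of your proof is incomplete -- though, to be fair, the same issue is hidden in the paper.
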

\begin{proof}
Without loss of generality we can assume that $0\in\Omega$. Then there exists $R>0$ such that $\bar{B}_R (0)\subset\Omega$. Let $\chi \in C_0^1(\Omega)$ be a nonnegative function such that $\chi\equiv 1$ on $\bar{B}_R (0)$. For every $\eps>0$ let us define $U_{\eps}=\chi U_{\eps,0}$. By \cite{BN}, see also \cite{Willem}, we have that 
\[
\| \nabla U_{\eps} \|_2^2= S^{N/2} + O(\eps^{N-2}),\qquad
\|  U_{\eps} \|_{2^*}^{2^*}= S^{N/2} + O(\eps^N)
\]
and
\[
\| U_{\eps} \|_2^2 \geq C \vp_N(\eps) + O(\eps^{N-2})\hbox{ for }N\geq 5, 
\]
for some $C>0$, where 
$$\vp_N(\eps)=
\begin{cases}
\eps^2 |\log \eps | & \hbox{if } N=4,\\
\eps^2 & \hbox{if } N\geq 5.
\end{cases}$$
Let $(k,l)\in\mathbb{R}^2$, $k,l>0$ such that $(kU_{\eps,y},lU_{\eps,y})$ is a ground state of the limit problem \eqref{eq2bis}.
and consider $(u_\eps,v_\eps) = (k U_{\eps}, l U_{\eps})$. We have that
\begin{gather*}
\| \nabla u_\eps \|_2^2=k^2 S^{N/2} + O(\eps^{N-2}), \qquad \| \nabla v_\eps \|_2^2=l^2 S^{N/2} + O(\eps^{N-2}),\\
\|  v_{\eps} \|_{2^*}^{2^*}= l^{2^*} S^{N/2} + O(\eps^N),\qquad
\int_\Omega u_\eps^{2^*-1} v_\eps = k^{2^*-1} l S^{N/2} + O(\eps^N)
\end{gather*}
and
\[
\|  u_{\eps} \|_2^2 \geq C \vp(\eps)+O(\eps^{N-2}).
\]
Then, since $(k,l)$ satisfies
\[
k^2 + \frac{1}{2^*-1} l^2 = \frac{2^*}{2^*-1} k^{2^*-1} l +
\frac{\mu}{2^*-1} l^{2^*},
\]
we get
\begin{align*}
\J ( t u_{\eps}, t v_{\eps}) = & \frac{1}{2} t^2 \| \nabla u_\eps \|_2^2 -\frac{\lambda}{2} t^2 \| u_{\eps} \|_2^2 + \frac{1}{2(2^*-1)} t^2 \| \nabla v_\eps \|_2^2 - \frac{\mu}{2^*(2^*-1)} t^{2^*} \|  v_{\eps} \|_{2^*}^{2^*}\\
& - \frac{1}{2^*-1} t^{2^*} \int_\Omega u_\eps^{2^*-1} v_\eps\\
\leq & \frac{1}{2} t^2 \left( \left(k^2+\frac{1}{2^*-1}l^2\right)S^{N/2}
 - \lambda C \vp(\eps) + O(\eps^{N-2})\right)\\
&- \frac{1}{2^*}t^{2^*} \left(\left(k^2+\frac{1}{2^*-1}l^2\right)S^{N/2}
 + O(\eps^N)\right)\\
 =& \frac{1}{2} t^2 ( N A  - \lambda C \vp(\eps) + O(\eps^2))
- \frac{1}{2^*}t^{2^*} (N A  + O(\eps^N)).
\end{align*}
Let us denote
\[
A_\eps = N A  - \lambda C \vp(\eps) + O(\eps^{N-2}),\qquad
B_\eps = N A + O(\eps^N)
\]
and
consider
\[
f(t):= \frac{A_\eps}{2}t^2 - \frac{B_\eps}{2^*}t^{2^*}.
\]
We have that 
$$\max_{t>0}f(t)=\frac{1}{N}\Big(\frac{A_\eps}{B_\eps^{(N-2)/N}}\Big)^{N/2}<A$$
for $\eps>0$ sufficiently small.
Thus
\[
\mathcal{B} \leq \max_{t>0} \J (t u_\eps, t v_\eps) < A.
\]
\end{proof}

Let us consider
\[
\mathcal{N}'=\left\{ (u,v) \in  (H^1_0(\Omega)\times H^1_0(\Omega))\setminus \{(0,0)\} \; \vert \; H(u,v)=0\right\}\]
where
\[
H(u,v)= \| \nabla u \|_2^2 - \lambda  \| u \|_2^2 + \frac{1}{2^*-1}  \| \nabla v \|_2^2 - \frac{\mu}{2^*-1} \| v \|_{2^*}^{2^*} - \frac{2^*}{2^*-1}\int_{\Omega} |u|^{2^*-1}v
\]
and 
\begin{equation}
\label{eq:constr}
\mathcal{A}:=\left\{(u,v)\in (H^1_0(\Omega)\times H^1_0(\Omega))|\;
\mu \| v \|_{2^*}^{2^*} + 2^* \int_{\Omega} |u|^{2^*-1}v >0\right\}
\end{equation}
the set of {\em admissible} pairs.
Note that, if $\lambda\in(0,\lambda_1(\Omega))$, we have that $\mathcal{N}'$ is a $C^1$-manifold being, for all $(u,v)\in\mathcal{N}'$,
\[
H'(u,v)[u,v]=(2-2^*)\left(\|\nabla u\|_2^2 - \lambda \| u \|_2^2 + \frac{1}{2^*-1} \|\nabla v\|_2^2\right) \neq 0.
\]
Moreover
$$\mathcal{N}\subset\mathcal{N'}\subset \mathcal{A}$$
and, in view of the H\"older inequality and the Sobolev embeddings,
\begin{equation}\label{estimatSobG}
H(u,v) \geq \|(u,v)\|^2-C\|(u,v)\|^{2^*}
\end{equation}
for some constant $C>0$, where 
\[
\|(u,v)\|^2:= \| \nabla u \|_2^2 - \lambda  \| u \|_2^2 + \frac{1}{2^*-1}  \| \nabla v \|_2^2.
\]
We have
\begin{Prop}\label{PropEqInfB}
If $\lambda\in(0,\lambda_1(\Omega))$
and $\mu\in\mathbb{I}_N$, then 
$$\inf_{(u,v)\in\mathcal{N}'} \J(u,v)=
\inf_{(u,v)\in \mathcal{A}}\max_{t\geq 0} \J(t u,t v)=
\mathcal{B}>0.
$$
\end{Prop}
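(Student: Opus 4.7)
\bigskip

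\textbf{Proof plan for Proposition \ref{PropEqInfB}.} The plan rests on the identity
\[
\J(u,v)=\frac{1}{N}\|(u,v)\|^2+\frac{1}{2^*}H(u,v),
\]
which follows from a direct algebraic check using $\frac{1}{2}-\frac{1}{2^*}=\frac{1}{N}$; in particular, on $\mathcal{N}'$ one has $\J(u,v)=\frac{1}{N}\|(u,v)\|^2$. Combining this with \eqref{estimatSobG} (which on $\mathcal{N}'$ gives $\|(u,v)\|^{2^*-2}\geq 1/C$) yields $\inf_{\mathcal{N}'}\J>0$.

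Next I would show that for every $(u,v)\in\mathcal{A}$, the function
$\phi(t)=\J(tu,tv)=\frac{t^2}{2}\|(u,v)\|^2-\frac{t^{2^*}}{2^*(2^*-1)}\bigl(\mu\|v\|_{2^*}^{2^*}+2^*\int_\Omega|u|^{2^*-1}v\bigr)$
attains a unique positive maximum at
\[
t(u,v)=\left(\frac{(2^*-1)\|(u,v)\|^2}{\mu\|v\|_{2^*}^{2^*}+2^*\int_\Omega|u|^{2^*-1}v}\right)^{1/(2^*-2)},
\]
and that $(t(u,v)u,t(u,v)v)\in\mathcal{N}'$; conversely, if $(u,v)\in\mathcal{N}'$ then $(u,v)\in\mathcal{A}$ and $t(u,v)=1$. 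This immediately delivers the identity
\[
\inf_{\mathcal{N}'}\J=\inf_{(u,v)\in\mathcal{A}}\max_{t\geq 0}\J(tu,tv).
\]

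The inequality $\mathcal{B}\leq \inf_{(u,v)\in\mathcal{A}}\max_{t\geq 0}\J(tu,tv)$ is then easy: given $(u,v)\in\mathcal{A}$, since $\phi(t)\to-\infty$ as $t\to\infty$, I can pick $T>t(u,v)$ with $\J(Tu,Tv)<0$ and use the path $w(s)=(sTu,sTv)$ in $\Gamma$, whose maximum of $\J$ equals $\max_{t\geq 0}\J(tu,tv)$.

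The hard part is the reverse inequality $\mathcal{B}\geq \inf_{\mathcal{N}'}\J$, which I would obtain by showing that \emph{every} $w\in\Gamma$ intersects $\mathcal{N}'$. The key observations are: (i) from $\J=\frac{1}{N}\|(u,v)\|^2+\frac{1}{2^*}H$, the condition $\J(w(1))<0$ forces $H(w(1))<0$ (and hence $w(1)\in\mathcal{A}$); (ii) by \eqref{estimatSobG} one can fix $\rho>0$ such that $\|(u,v)\|\leq \rho$ implies $H(u,v)\geq \frac{1}{2}\|(u,v)\|^2\geq 0$. Define
\[
t^*=\sup\bigl\{t\in[0,1]\;\vert\; H(w(s))\geq 0\text{ for all }s\in[0,t]\bigr\}.
\]
Then $t^*\in(0,1)$ by the above, and continuity gives $H(w(t^*))=0$. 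The delicate point is to rule out $w(t^*)=(0,0)$: for any $\varepsilon>0$ there exists $s_\varepsilon\in(t^*,t^*+\varepsilon]$ with $H(w(s_\varepsilon))<0$, which by (ii) forces $\|w(s_\varepsilon)\|>\rho$; sending $\varepsilon\to 0$ and using continuity of $\|w(\cdot)\|$ yields $\|w(t^*)\|\geq \rho>0$. Hence $w(t^*)\in\mathcal{N}'$ and $\max_{t\in[0,1]}\J(w(t))\geq \J(w(t^*))\geq \inf_{\mathcal{N}'}\J$, concluding the proof.
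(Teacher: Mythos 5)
Your proposal is correct and follows essentially the same route as the paper: explicit projection of $\mathcal{A}$ onto $\mathcal{N}'$ via the unique maximizer $t(u,v)$, rays giving $\mathcal{B}\le\inf_{\mathcal{A}}\max_t\J$, and the key step of showing every $w\in\Gamma$ crosses $\mathcal{N}'$ at a nonzero point, with \eqref{estimatSobG} providing both positivity of the infimum and the small-norm lower bound $H\ge\frac12\|\cdot\|^2$. The one small organizational difference is that the paper tracks the sign of $\varphi(t)=\mu\|w_2(t)\|_{2^*}^{2^*}+2^*\int|w_1(t)|^{2^*-1}w_2(t)$ (membership in $\mathcal{A}$) and splits into cases $H(w(t_0))>0$ versus $H(w(t_0))=0$, whereas you track $H$ directly and use the $\rho$-lower bound to rule out $w(t^*)=(0,0)$ in one stroke — a modest streamlining of the same argument.
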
 
\begin{proof}
Let $(u,v)\in \mathcal{A}$ and 
\begin{equation*} \label{DefOfTbar}
\bar{t}=\left[\left(\| \nabla u \|_2^2 - \lambda  \| u \|_2^2 +\frac{1}{2^*-1}\| \nabla v \|_2^2\right)
\left(\frac{\mu}{2^*-1} \| v \|_{2^*}^{2^*} + \frac{2^*}{2^*-1} \int_{\Omega} |u|^{2^*-1}v\right)^{-1}\right]^{\frac{1}{2^*-2}}.
\end{equation*}
Observe that $(\bar{t}u,\bar{t}v)\in\mathcal{N}'$ and so
$$\mathcal{J} (\bar{t}u,\bar{t}v)\geq \inf_{(u,v)\in\mathcal{N}'} \J(u,v).$$
Moreover $\bar{t}$ is the unique strictly positive real number such that
\[
\mathcal{J} (\bar{t}u,\bar{t}v) =
\max_{t\geq 0} \mathcal{J} (tu,tv).
\]
If $(u,v)\in\mathcal{N}'$, then $\bar{t}=1$ and, since $\mathcal{N'}\subset \mathcal{A}$, we get
$$\inf_{(u,v)\in\mathcal{N}'} \J(u,v)\geq
\inf_{(u,v)\in \mathcal{A}}\max_{t\geq 0} \J(t u,t v).$$
Moreover, since if $(u,v)\in\mathcal{A}$, then there is $t>0$ such that $\J(tu,tv)<0$,
$$\inf_{(u,v)\in \mathcal{A}}\max_{t\geq 0} \J(t u,t v)\geq \mathcal{B}.$$
Let now $w=(w_1,w_2)\in \Gamma$.  We claim that there exists $t_1 > 0$ such that $H(w(t_1))=0$, namely $w(t_1)\in \mathcal{N}'$.
To this end we consider the continuous function
$\varphi:[0,1]\to\R$,
$$\varphi(t)=\mu\|w_2(t)\|_{2^*}^{2^*}+2^*\int_{\Omega}|w_1(t)|^{2^*-1} w_2(t).$$
We have that $\varphi(1)>0$. Let $t_0\in[0,1)$ such that
$\varphi(t_0)=0$ and $\varphi(t)>0$ for $t\in(t_0,1]$.
Observe that
$$H(w(t_0))\geq 0$$
and
$$H(w(1))=2\J(w(1))-\frac{2}{N(2^*-1)}\varphi(1)<0.$$
If $H(w(t_0))>0$, our claim is proved.
If $H(w(t_0))=0$, then $w(t_0)=(0,0)$. Thus, by \eqref{estimatSobG}, $H(w(t'_0))>0$ for some $t_0'\in[t_0,1)$ and we get the claim. Then
$$ \mathcal{B}\geq \inf_{(u,v)\in \mathcal{N}'} 
\J(u,v).$$
Finally, note that if $\J(u_n,v_n)\to 0$ and $(u_n,v_n)\in\mathcal{N}'$
then $\|(u_n,v_n)\|\to 0$ which contradicts the inequality 
\eqref{estimatSobG}.
Thus 
$$\inf_{(u,v)\in\mathcal{N}'} \J(u,v)>0.$$
\end{proof}
We notice that in this last proof we only need that $\mathcal{N}\neq\emptyset$. Then, if $N=4,5$ we can assume $\mu\in[0,\mu_N]$.
\begin{Rem}
In the study of elliptic problems involving the Mountain Pass geometry, usually one expects that a Nehari manifold is homeomorphic to the unit sphere (see e.g. \cite[Lemma 4.1]{Willem}). However, due to the sing-changing nonliearities, this no longer holds in our case, but we have that the map $(u,v)\mapsto (\bar{t}u,\bar{t}v)$, where  $\bar{t}$ is given by \eqref{DefOfTbar}, defines a homeomorphism from $\mathcal{A}\cap S^1$ into $\mathcal{N}'$, with $S^1:=\{(u,v)\in (H^1_0(\Omega)\times H^1_0(\Omega))
|\; \|(u,v)\|=1\}$.
\end{Rem}

Before we prove the main result of this section, we show the following preliminary property.

\begin{Prop}
Let $\lambda\in (0,\lambda_1(\Omega))$ and $\mu\geq 0$. If a ground state $(u,v)$ of \eqref{eq} exists, then $(u,v)$ nontrivial.
\end{Prop}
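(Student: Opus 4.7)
The plan is to establish the two conditions $v\not\equiv 0$ and $u\not\equiv 0$ separately, with the second being the substantive part and requiring an energy comparison.

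The vanishing of $v$ is ruled out immediately. If $v\equiv 0$, the first Nehari identity $\mathbf{G}_1(u,0)=0$ collapses to $\|\nabla u\|_2^2=\lambda\|u\|_2^2$, which combined with the Poincar\'e inequality $\lambda_1(\Omega)\|u\|_2^2\le\|\nabla u\|_2^2$ and the hypothesis $\lambda<\lambda_1(\Omega)$ forces $u\equiv 0$, contradicting $(u,v)\ne(0,0)$.

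The vanishing of $u$ is ruled out by contradiction. Suppose $u\equiv 0$. Since $(0,v)$ is a critical point of $\J$, the Euler--Lagrange equation in the $v$-direction is $-\Delta v=\mu|v|^{2^*-2}v$. If $\mu=0$ this gives $v\equiv 0$ by Dirichlet harmonicity, a contradiction; so $\mu>0$. Then $\|\nabla v\|_2^2=\mu\|v\|_{2^*}^{2^*}$, and the Sobolev inequality \eqref{bestsob} yields $\|v\|_{2^*}^{2^*-2}\ge S/\mu$, so
\[
\J(0,v)=\frac{\mu}{N(2^*-1)}\|v\|_{2^*}^{2^*}\ge\frac{S^{N/2}}{N(2^*-1)\mu^{(N-2)/2}}.
\]
To contradict the minimality of this energy on $\mathcal{N}$, I will exhibit a pair in $\mathcal{N}$ with both components nonzero and with strictly smaller energy. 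Following the construction of Lemma \ref{LemmaStep1} I take $U_\eps=\chi U_{\eps,0}$ and, by Lemma \ref{le:nnpty} applied to $u=U_\eps$, obtain $s_\eps,t_\eps>0$ with $(s_\eps U_\eps,t_\eps U_\eps)\in\mathcal{N}$, where $(s_\eps,t_\eps)\to(k,l)$ as $\eps\to 0^+$ and $(kU_{\eps,0},lU_{\eps,0})$ is a limit ground state from Section \ref{SectionLimitProb}. Using the identity
\[
\J(u,v)=\frac{1}{N}\bigl(\|\nabla u\|_2^2-\lambda\|u\|_2^2\bigr)+\frac{1}{N(2^*-1)}\|\nabla v\|_2^2 \quad\text{for } (u,v)\in\mathcal{N},
\]
together with the Brezis--Nirenberg asymptotics recalled in the proof of Lemma \ref{LemmaStep1}, the energy expands as $A-c\lambda\vp_N(\eps)+o(\vp_N(\eps))$ with $c>0$, so $\J(s_\eps U_\eps,t_\eps U_\eps)<A$ for small $\eps$. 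The upper bound $A\le S^{N/2}/(N(2^*-1)\mu^{(N-2)/2})$, which is precisely the value attained by the semi-trivial Aubin--Talenti pair in the limit problem, is then read off from Lemma \ref{Lempq_estimate} for $N=4$ and from Theorems \ref{5lim}, \ref{6lim}, \ref{7lim} for the remaining dimensions, yielding $\J(s_\eps U_\eps,t_\eps U_\eps)<\J(0,v)$ and closing the contradiction.

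The main technical obstacle is verifying the sign of the leading correction in the expansion of $\J(s_\eps U_\eps,t_\eps U_\eps)$ around $A$: the gain from $-\lambda\|U_\eps\|_2^2$ and the drift of $(s_\eps,t_\eps)$ away from the limit pair $(k,l)$ both enter at order $\vp_N(\eps)$, and a short implicit-function computation on the defining equation $m^{2^*-1}-\sigma m^{2^*-3}+\mu=0$ of Lemma \ref{le:nnpty} is required to confirm that they combine into a strictly negative coefficient.
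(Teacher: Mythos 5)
Your first step (ruling out $v\equiv 0$ via the Nehari identity and Poincar\'e) matches the paper's, and the $\mu=0$ case of the second step is fine. For $u\equiv 0$, $\mu>0$, however, you take a genuinely different route, and it has real gaps; the paper's own argument is both shorter and more robust.

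The paper does \emph{not} compare with the limit problem at all. It observes that every $(0,w)$ with $\|\nabla w\|_2^2=\mu\|w\|_{2^*}^{2^*}$ lies in $\mathcal{N}$, so a semi-trivial ground state $(0,v)$ would make $\J(0,\cdot)$ attain its infimum over this one-parameter constraint; by the scale-invariance of that restricted functional this is equivalent to $\bar v=(\mu/\|\nabla v\|_2^2)^{1/2^*}v$ attaining $\inf\{\|\nabla w\|_2^N:\,w\in H^1_0(\Omega),\,\|w\|_{2^*}=1\}$, i.e.\ the Sobolev constant, which is never attained on a bounded domain. No Aubin--Talenti expansion, no $\mathcal{N}_0$, no Lemma \ref{LemmaStep1}, and it works for \emph{every} $\mu>0$, which is exactly the generality the proposition asks for.

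Your limit-problem route has the following concrete problems. First, Lemma \ref{le:nnpty}'s construction always yields $s_\eps,t_\eps>0$ and $\bar m$ a positive root of $m^{2^*-1}-\sigma m^{2^*-3}+\mu=0$; as $\eps\to 0^+$, $\sigma\to 1$ and $\bar m$ tends to a positive root of $f_N$. But when the limit ground state is semi-trivial (for instance $N=4$ and $\mu\in(\sqrt{6}/9,2\sqrt{3}/9]$, Theorem \ref{ThSecodInterval}), there is \emph{no} ground state of the form $(kU,lU)$ with $k>0$, so your limit pair $(k,l)$ cannot be the ground state and $\J_0(kU_{\eps,0},lU_{\eps,0})>A$. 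Then $\J(s_\eps U_\eps,t_\eps U_\eps)$ converges to a value strictly \emph{above} $A$, and the inequality $\J(s_\eps U_\eps,t_\eps U_\eps)<A$ you need cannot hold. Since the proposition is stated for all $\mu\ge 0$, this regime cannot be excluded. Second, even in the range where the limit ground state is nontrivial, $f_N$ may have two positive roots; $\bar m$ must converge to the one that minimizes $\psi_N$, and nothing in your argument pins this down. Third, you explicitly leave open the sign of the $O(\vp_N(\eps))$ correction, which is the entire content of the comparison; without carrying out that expansion (and accounting for the drift in $(s_\eps,t_\eps)$, which enters at the same order) the inequality is unproved. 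These could conceivably be repaired where they make sense, but the result would be a far heavier proof than the paper's one-paragraph reduction to the nonattainability of $S$ on $H^1_0(\Omega)$.
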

\begin{proof}
Let $(u,v)\in\mathcal{N}$ be such that 
$$\J(u,v)=\inf_{\mathcal{N}}\J.$$
If $v=0$, then $\langle\J'(u,0),(u,0)\rangle=0$ implies $u=0$.
Now suppose that $u=0$. If $\mu= 0$, then we get easily that $v=0$.
Let $\mu>0$ and then
$v$ is a nontrivial solution to
\[
\left\{
\begin{array}{ll}
    -\Delta v = \mu |v|^{2^*-2}v
    &
    \hbox{in } \Omega,\\
    v=0
    &
    \hbox{on } \partial \Omega.
\end{array}
\right.
\]
Observe that
\begin{align*}
\inf \{\J(0,w)|\; w\in H^1_0(\Omega)\setminus\{0\} ,\|\nabla w\|_2^2=\mu\|w\|_{2^*}^{2^*}\}
& \leq 
\J(0,v) = \inf_{\mathcal{N}}\J\\
& \leq
\inf \{\J(0,w)|\; w\in H^1_0(\Omega)\setminus\{0\} ,\|\nabla w\|_2^2=\mu\|w\|_{2^*}^{2^*}\}
\end{align*}
and
\begin{align*}
\inf \{\J(0,w)|\; w\in H^1_0(\Omega)\setminus\{0\} ,&\|\nabla w\|_2^2=\mu\|w\|_{2^*}^{2^*}\}\\
 = &
\frac{1}{N(2^*-1)} \inf \{\|\nabla w\|_2^2|\; w\in H ^1_0(\Omega)\setminus\{0\}, \|\nabla w\|_2^2=\mu\|w\|_{2^*}^{2^*}\}\\
 = &
\frac{1}{N(2^*-1))\mu^{(N-2)/N}}\inf \{\|\nabla w\|_2^{N}|\; w\in H ^1_0(\Omega),\|w\|_{2^*}=1\}.
\end{align*}
Then 
\[
\bar{v}=\left(\frac{\mu}{\|\nabla v\|_2^2}\right)^{1/{2^*}}v
\]
satisfies $\|\bar{v}\|_{2^*}=1$ and
\[
\|\nabla\bar{v}\|_2^{N}
= 
N(2^*-1)\mu^{(N-2)/N} \J(0,v)
=
\inf \{\|\nabla w\|_2^{N}|\; w\in H ^1_0(\Omega),\|w\|_{2^*}=1\},
\]
which is a contradiction (see \cite[Proposition 1.43]{Willem}).
\end{proof}
Now we are ready to prove the following
\begin{Th}\label{MainProp}
If $\lambda\in (0,\lambda_1(\Omega))$, $\mu\in \mathbb{I}_N$, then there exists a ground state $(u,v)$ of $\J$ such that
$$\J(u,v)=\inf_{\mathcal{N}}\J=\inf_{\mathcal{N}'} \J
=\mathcal{B}.$$
\end{Th}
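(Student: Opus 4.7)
The plan is to produce a Palais--Smale sequence at the Mountain Pass level $\mathcal{B}$ and show that, up to a subsequence, it converges weakly to a nontrivial, positive critical point of $\J$ at this same level. The compactness issue is controlled by the strict inequality $\mathcal{B}<A$ from Lemma~\ref{LemmaStep1}, while positivity is handled by switching to a $C^1$ auxiliary functional $\J_+$ built from positive parts in the nonlinear terms.

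First I would invoke the Mountain Pass theorem (whose geometry is guaranteed by Proposition~\ref{PropEqInfB} together with $\mathcal{B}>0$) to obtain a sequence $\{(u_n,v_n)\}\subset H^1_0(\Omega)\times H^1_0(\Omega)$ with $\J(u_n,v_n)\to\mathcal{B}$ and $\J'(u_n,v_n)\to 0$. Standard manipulations combining $\J(u_n,v_n)=\mathcal{B}+o_n(1)$ with $\langle\J'(u_n,v_n),(u_n,v_n)\rangle=o_n(1)\|(u_n,v_n)\|$ give boundedness, using the fact that $\lambda<\lambda_1(\Omega)$ makes $\|\cdot\|$ an equivalent norm. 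Passing to a subsequence, $(u_n,v_n)\rightharpoonup(u_0,v_0)$ weakly in $H^1_0\times H^1_0$, and $(u_0,v_0)$ is a critical point of $\J$ by weak continuity of the differential on bounded sequences (the critical terms pass to the limit via almost everywhere convergence and Vitali / Brezis--Lieb).

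The core step is to exclude $(u_0,v_0)=(0,0)$. If this were the case, for $n$ large the pair $(u_n,v_n)$ is admissible, i.e.\ $(u_n,v_n)\in\mathcal{A}$, so it can be projected onto the Nehari manifold $\mathcal{N}_0$ of the limit problem by a unique rescaling $t_n>0$. Using a Brezis--Lieb type decomposition for the critical nonlinearities (which, under vanishing of the weak limit, eliminates the lower-order $\lambda\|u_n\|_2^2$ contribution) one obtains $\liminf_n\J_0(t_nu_n,t_nv_n)\leq\liminf_n\J(u_n,v_n)=\mathcal{B}$, contradicting $\J_0|_{\mathcal{N}_0}\geq A$ together with $\mathcal{B}<A$. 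Hence $(u_0,v_0)\neq(0,0)$, and then $(u_0,v_0)\in\mathcal{N}\subset\mathcal{N}'$ gives $\mathcal{B}=\inf_{\mathcal{N}'}\J\leq\J(u_0,v_0)\leq\liminf_n\J(u_n,v_n)=\mathcal{B}$, from Fatou's lemma applied to the nonnegative part $\J(u,v)-\tfrac{1}{2}\langle\J'(u,v),(u,v)\rangle$. Since $\mathcal{N}\subset\mathcal{N}'$, this also yields $\inf_{\mathcal{N}}\J=\inf_{\mathcal{N}'}\J=\mathcal{B}$ and the infimum is attained.

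To obtain positivity, the functional $\J$ itself is problematic since $\J(u,v)\neq\J(|u|,|v|)$ and a naive truncation of the nonlinearities breaks $C^1$-regularity. I would instead replace $\J$ throughout by a suitably smoothed functional $\J_+$ (as announced in \eqref{DefOfJplus}) whose critical points are nonnegative solutions of \eqref{eq}, repeat the entire argument above with $\J_+$ in place of $\J$ (noting that the Mountain Pass level and the estimates of Lemma~\ref{LemmaStep1} are unchanged on nonnegative test functions), and conclude with the strong maximum principle to upgrade nonnegativity to strict positivity of both components. The main obstacle is the non-triviality step: because of the sign-changing coupling $|u|^{2^*-1}v$ and two critical nonlinearities, the classical Brezis--Nirenberg concentration analysis must be adapted to the system, and the admissibility of Palais--Smale sequences (which allows projection onto $\mathcal{N}_0$) combined with the strict estimate $\mathcal{B}<A$ is exactly the mechanism that rules out the concentration scenario.
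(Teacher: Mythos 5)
Your overall strategy matches the paper's: produce a $(PS)_{\mathcal{B}}$ sequence, extract a weak limit which is a critical point, rule out vanishing by projecting onto a Nehari-type manifold of the limit problem and invoking $\mathcal{B}<A$, then handle positivity via the auxiliary functional $\J_+$. However, there is a genuine imprecision in the nontriviality step that you should fix. You claim to project $(u_n,v_n)$ onto $\mathcal{N}_0$ by a single rescaling $t_n>0$, but $\mathcal{N}_0$ is defined by the two constraints $\mathbf{G}_0(u,v)=(0,0)$, and a one-parameter scaling cannot satisfy both simultaneously. What a single $t_n$ does achieve, when $(u_n,v_n)\in\mathcal{A}$, is membership in the single-constraint manifold $\mathcal{N}_0'$, defined by $H_0=0$. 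The bound one then gets is $\J_0(t_nu_n,t_nv_n)\geq A'$, not $\geq A$, and the argument only closes because the paper proves $A'=A$ in Section~\ref{SectionLimitProb}; this identity is therefore essential to the contradiction and should be invoked explicitly.

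A second, smaller point: your justification that $\liminf_n\J_0(t_nu_n,t_nv_n)\leq\mathcal{B}$ via a Brezis--Lieb decomposition is vaguer than needed. The paper's route is more direct and more elementary: since $u_n\to0$ in $L^2(\Omega)$ (compact embedding), one has $\langle\J_0'(u_n,v_n),(u_n,v_n)\rangle=\langle\J'(u_n,v_n),(u_n,v_n)\rangle+o(1)=o(1)$; combined with $H_0(t_nu_n,t_nv_n)=0$ and the lower bound $\|(u_n,v_n)\|\geq C>0$, this forces $t_n\to1$, whence $\J_0(t_nu_n,t_nv_n)=\J(u_n,v_n)+o(1)\to\mathcal{B}$. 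Once you replace $\mathcal{N}_0$ by $\mathcal{N}_0'$, use $A'=A$, and use the $t_n\to1$ argument in place of Brezis--Lieb, your proof coincides with the paper's. Note finally that the positivity argument you sketch actually belongs to the proof of Theorem~\ref{MainTh}; Theorem~\ref{MainProp} as stated only asserts existence of a ground state attaining the levels $\inf_{\mathcal{N}}\J=\inf_{\mathcal{N}'}\J=\mathcal{B}$, so the $\J_+$ step is superfluous here though harmless.
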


\begin{proof}
The functional $\J$ satisfies the geometrical assumptions of the Mountain Pass Theorem. Indeed, obviously, $\J(0,0)=0$. 
Using the Poincar\'e  and the Sobolev inequalities we have that
\[
\J(u,v)
\geq  
C(\|\nabla u\|_2^2 +\|\nabla v\|_2^2 -\|\nabla v\|_2^{2^*} -\|\nabla u\|_2^{2^*-1} \|\nabla v\|_2)
\geq  \alpha
\]
for some $\alpha>0$ and $\rho=\sqrt{\|\nabla u\|_2^2 +\|\nabla v\|_2^2}$ sufficiently small.
Moreover if $(u,v)\in H^1_0(\Omega)\times H^1_0(\Omega))$ satisfies
\begin{equation*}
\mu \| v \|_{2^*}^{2^*} + 2^* \int_{\Omega} |u|^{2^*-1}v\; dx >0,
\end{equation*}
then
\[
\mathcal{J} (tu,tv)= \frac{t^2}{2} \left( \| \nabla u \|_2^2 - \lambda  \| u \|_2^2 + \frac{1}{2^*-1}  \| \nabla v \|_2^2  \right) - \frac{t^{2^*}}{2^*-1} \left( \frac{\mu}{2^*} \| v \|_{2^*}^{2^*} + \int_{\Omega} |u|^{2^*-1}v\; dx \right)\to -\infty
\]
as $t \to +\infty$.
Then there exists a $(PS)_{\mathcal{B}}$-sequence $\{(u_n,v_n)\}\in H^1_0(\Omega)\times H^1_0(\Omega)$ for $\J$ at level $\mathcal{B}$, i.e. a sequence such that $\J(u_n,v_n)\to\mathcal{B}$ and $\J'(u_n,v_n)\to 0$.
Since for some constant $C>0$
\[
C (\|\nabla u_n\|_2^2 + \|\nabla v_n\|_2^2)
\leq 
\J(u_n,v_n)-\frac{1}{2^*}\langle\J'(u_n,v_n),(u_n,v_n)\rangle
\leq 
\mathcal{B}+1+\sqrt{\|\nabla u_n\|_2^2 + \|\nabla v_n\|_2^2},
\]
we have that the sequence $\{(u_n,v_n)\}$ is bounded.
Therefore, up to a subsequence, we may assume that there exists $(u,v)\in H^1_0(\Omega)\times H^1_0(\Omega)$ such that
\[
\begin{array}{lllll}
u_n\rightharpoonup u
& \hbox{ in } H^1_0(\Omega),
& 
& v_n\rightharpoonup v
& \hbox{ in } H^1_0(\Omega),\\
|u_n|^{2^*-1}\rightharpoonup |u|^{2^*-1}
& \hbox{ in } L^{2^*/(2^*-1)}(\Omega),
& 
& |v_n|^{2^*-2}v_n \rightharpoonup |v|^{2^*-2}v
& \hbox{ in } L^{2^*/(2^*-1)}(\Omega),\\
|u_n|^{2^*-3}u_n v_n \rightharpoonup |u|^{2^*-3}u v
& \hbox{ in } L^{2^*/(2^*-1)}(\Omega),
& 
& u_n \to u 
& \hbox{ in } L^2(\Omega),\\
u_n\to u
& \hbox{ a.e. on }\Omega,
& 
& v_n\to v
& \hbox{ a.e. on }\Omega.
\end{array}
\]
Hence, for every $(\xi,\eta)\in H^1_0(\Omega)\times H^1_0(\Omega)$, we have
\begin{multline*}
|\langle\J'(u_n,v_n),(\xi,\eta)\rangle
- \langle\J'(u,v),(\xi,\eta)\rangle |\\
= 
\left| \int_\Omega (\nabla u_n - \nabla u) \nabla \xi
- \lambda \int_\Omega (u_n - u) \xi
+\frac{1}{2^*-1} \int_\Omega (\nabla v_n - \nabla v) \nabla \eta
-\frac{1}{2^*-1} \int_\Omega (|u_n|^{2^*-1} - |u|^{2^*-1}) \eta
\right.\\
\left.
-\frac{\mu}{2^*-1} \int_\Omega (|v_n|^{2^*-2}v_n - v^{2^*-2}v) \eta
- \int_\Omega (|u_n|^{2^*-3}u_n v_n - |u|^{2^*-3}u v) \xi
\right|
 \to 0.
\end{multline*}
Thus $\J'(u,v)=0$. \\
We claim that $(u,v)\neq(0,0)$. Indeed, suppose by contradiction that $(u,v)=(0,0)$ and so
\begin{equation}
\label{tired}
u_n \to 0 \hbox{ in } L^2(\Omega).
\end{equation}
Since $\J$ is continuous and $\J(u_n,v_n)\to\mathcal{B}>0$, then $(u_n,v_n)$ cannot converge to $(0,0)$ in $H^1_0(\Omega)\times H^1_0(\Omega)$. So, up to a subsequence,
%
%
%
%
we may assume that $(u_n,v_n)\neq (0,0)$ and $\|( u_n, v_n) \|\geq C >0$ and, moreover, that $(u_{n},v_{n})\in\mathcal{A}$ for all $n\in\mathbb{N}$. Indeed, if there exists a subsequence $\{(u_{n_k},v_{n_k})\}$ of $\{(u_{n},v_{n})\}$ in $(H^1_0(\Omega)\times H^1_0(\Omega))\cap\mathcal{A}^c$, then
\[
\langle\J'(u_{n_k},v_{n_k}),(u_{n_k},v_{n_k})\rangle
\geq
\|( u_{n_k}, v_{n_k}) \|^2
\]
and, since
$$\langle\J'(u_{n_k},v_{n_k}),(u_{n_k},v_{n_k})\rangle\to 0
\hbox{ as } k\to+\infty,$$
we get a contradiction.\\
Hence, if we take  
\[
t_n=
\left[\left((2^*-1)\| \nabla u_{n} \|_2^2 +\| \nabla v_{n} \|_2^2\right)
\left(\mu \| v_{n} \|_{2^*}^{2^*} + 2^* \int_{\Omega} |u_{n}|^{2^*-1}v_{n}\right)^{-1}\right]^{\frac{1}{2^*-2}}
\]
and we denote in the same way the funcions in $H^1_0(\Omega)$ and their extensions in $\mathbb{R}^N$ putting the function equal to zero in $\mathbb{R}^N\setminus\Omega$, we have that $(t_n u_n,t_n v_n)\in\mathcal{N}_0'$ and so 
\begin{equation}
\label{proj}
\langle\J_0'(t_nu_n,t_nv_n),(t_nu_n,t_nv_n)\rangle=0.
\end{equation}
Moreover, using \eqref{tired},
\begin{equation}
\label{tired2}
\langle\J_0'(u_n,v_n),(u_n,v_n)\rangle=\langle\J'(u_n,v_n),(u_n,v_n)\rangle+o(1)=o(1).
\end{equation}
Thus, combining \eqref{proj} and \eqref{tired2} we get that  $t_n\to 1$.
Hence, taking into account Lemma \ref{LemmaStep1}, Theorems \ref{PropAandA}, \ref{cor0}, and \ref{corupp} for $N=5$, or corresponding results from Subsection \ref{subs22} for $N\geq 6$, we have
$$\mathcal{B} < A = A' \leq\lim_{n}\J(t_nu_n,t_nv_n)=\mathcal{B}$$
getting a contradiction.\\
Hence $(u,v)\neq (0,0)$ and  
$(u,v)\in\mathcal{N}
\subset \mathcal{N}'$. Similarly as above, we find $t_n\to 1$ such that 
$(t_n u_n,t_n v_n)\in\mathcal{N}'$. In view of  Proposition \ref{PropEqInfB} we get
\begin{equation*} 
\inf_{\mathcal{N}}\J\leq \J(u,v)
\leq \lim_{n\to\infty} \J(t_nu_n,t_nv_n)=\mathcal{B}= \inf_{\mathcal{N}'}\J
\leq\inf_{\mathcal{N}}\J
\end{equation*}
and we conclude.
\end{proof}

To prove that our solutions are positive, let us write $u=u_+ + u_-$, where $u_+$ and $u_-$ are respectively the positive and the negative part of $u$ and let us consider the following functional
\begin{equation}\label{DefOfJplus}
\J_+(u,v)=\frac{1}{2}\int_{\Omega}|\nabla u|^2-\frac{\lambda}{2}\int_{\Omega}|u|^2
+\frac{1}{2(2^*-1)}\int_{\Omega}|\nabla v|^2
-\frac{\mu}{2^*(2^*-1)}\int_{\Omega} v_{+}^{2^*}
-\frac{1}{2^*-1}\int_{\Omega}  u_+^{2^*-1}v
\end{equation}
which is of $C^1$ class on $H^1_0(\Omega)\times H^1_0(\Omega)$, with
\[
\langle \J'_+(u,v),(\xi,\eta) \rangle
=
\int_{\Omega}\nabla u \nabla \xi
-\lambda\int_{\Omega}  u\xi
-\int_{\Omega}u_+^{2^*-2}v \xi
+
\frac{1}{2^*-1}\int_{\Omega}\nabla v \nabla \eta -\frac{\mu}{2^*-1}\int_{\Omega}v_{+}^{2^*-1} \eta
-\frac{1}{2^*-1}\int_{\Omega}u_+^{2^*-1} \eta.
\]
We have
\begin{Lem}\label{LemPSuv_plus}
Suppose that $\{(u_n,v_n)\}$ is a $(PS)_{c}$-sequence for $\J_{+}$, with $c>0$. Then $\{(u_n,v_n)\}$ is bounded and $\{((u_n)_{+},(v_n)_{+})\}$ is also a $(PS)_{c}$-sequence for $\J_{+}$.
\end{Lem}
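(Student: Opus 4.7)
The plan is to handle boundedness via the standard linear combination of $\J_+$ and $\langle \J'_+,(u,v)\rangle$, and then to recover the Palais--Smale property for $((u_n)_+,(v_n)_+)$ by showing that the negative parts converge strongly to zero in $H^1_0(\Omega)$. Throughout I assume the ambient hypothesis $\lambda\in(0,\lambda_1(\Omega))$ of Theorem \ref{MainTh}.

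For boundedness, I would form
\[
\J_+(u_n,v_n)-\tfrac{1}{2^*}\langle \J'_+(u_n,v_n),(u_n,v_n)\rangle.
\]
Using the pointwise identities $(u_n)_+^{2^*-2}u_n=(u_n)_+^{2^*-1}$ and $(v_n)_+^{2^*-1}v_n=(v_n)_+^{2^*}$ (both being consequences of $(u_n)_+(u_n)_-=0$ and $(v_n)_+(v_n)_-=0$ a.e.), the coupling term $\int_\Omega (u_n)_+^{2^*-1}v_n$ and the term $\mu\|(v_n)_+\|_{2^*}^{2^*}$ cancel exactly, leaving
\[
\J_+(u_n,v_n)-\tfrac{1}{2^*}\langle \J'_+(u_n,v_n),(u_n,v_n)\rangle
=\tfrac{1}{N}\bigl(\|\nabla u_n\|_2^2-\lambda\|u_n\|_2^2\bigr)+\tfrac{1}{N(2^*-1)}\|\nabla v_n\|_2^2.
\]
Since $\lambda<\lambda_1(\Omega)$, the Poincar\'e inequality bounds the right side from below by a positive multiple of $\|\nabla u_n\|_2^2+\|\nabla v_n\|_2^2$, while the left side is $\leq c+1+o(1)\sqrt{\|\nabla u_n\|_2^2+\|\nabla v_n\|_2^2}$, yielding boundedness.

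Next, I would test $\J'_+(u_n,v_n)$ against $(-(u_n)_-,0)$ and $(0,-(v_n)_-)$ separately. Using $\nabla u\cdot\nabla u_-=-|\nabla u_-|^2$ and $u\,u_-=-u_-^2$ a.e., together with $(u_n)_+^{2^*-2}(u_n)_-=0$ and $(v_n)_+^{2^*-1}(v_n)_-=0$ a.e., these tests give
\[
\|\nabla (u_n)_-\|_2^2-\lambda\|(u_n)_-\|_2^2=o(1),\qquad
\tfrac{1}{2^*-1}\|\nabla(v_n)_-\|_2^2+\tfrac{1}{2^*-1}\int_\Omega (u_n)_+^{2^*-1}(v_n)_-=o(1).
\]
The first identity, combined with $\lambda<\lambda_1(\Omega)$ and Poincar\'e, forces $(u_n)_-\to 0$ in $H^1_0(\Omega)$. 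In the second identity both summands are individually nonnegative (the integral because $(u_n)_+^{2^*-1}\geq 0$ and $(v_n)_-\geq 0$), hence each is $o(1)$ and $(v_n)_-\to 0$ in $H^1_0(\Omega)$ as well.

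Finally, writing $((u_n)_+,(v_n)_+)=(u_n,v_n)+((u_n)_-,(v_n)_-)$, the second summand tends to zero strongly in $H^1_0(\Omega)\times H^1_0(\Omega)$. Since $\J_+$ is $C^1$ and $\{(u_n,v_n)\}$ is bounded, $\J_+$ and $\J'_+$ are Lipschitz on a bounded neighborhood containing both sequences, so $\J_+((u_n)_+,(v_n)_+)\to c$ and $\J'_+((u_n)_+,(v_n)_+)\to 0$ in $(H^1_0(\Omega)\times H^1_0(\Omega))^*$. The main subtlety is the second testing step: unlike the purely local argument for $(u_n)_-$, the coupling term $-\tfrac{1}{2^*-1}\int_\Omega u_+^{2^*-1}\eta$ in $\J'_+$ does \emph{not} vanish under the substitution $\eta=-(v_n)_-$, and the proof relies crucially on the fact that this surviving term has a favorable sign so that it, together with $\|\nabla(v_n)_-\|_2^2$, produces a sum of nonnegative quantities that can be split term by term.
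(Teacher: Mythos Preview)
Your argument is correct and follows the same strategy as the paper: bound the sequence via $\J_+-\tfrac{1}{2^*}\langle\J'_+,\cdot\rangle$, test $\J'_+$ against the negative parts to force $((u_n)_-,(v_n)_-)\to 0$ in $H^1_0\times H^1_0$, and then transfer the Palais--Smale property to the positive parts. The only differences are cosmetic---you test the two components separately while the paper tests against $((u_n)_-,(v_n)_-)$ jointly, and you invoke an abstract continuity argument for the final step while the paper computes $\J_+(u_n,v_n)-\J_+((u_n)_+,(v_n)_+)$ and $\J'_+(u_n,v_n)-\J'_+((u_n)_+,(v_n)_+)$ explicitly.

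One small caveat: your claim that $\J'_+$ is \emph{Lipschitz} on bounded sets is not quite right, since $\J_+$ is only $C^1$ (the map $t\mapsto t_+$ is not $C^1$, so there is no second derivative bound). What does hold---and suffices---is that $\J'_+$ is uniformly continuous on bounded sets, by standard Nemytskii operator estimates; alternatively, the explicit computation in the paper shows directly that the difference of derivatives is controlled by $\|\nabla(u_n)_-\|_2$, $\|\nabla(v_n)_-\|_2$, and $\|(u_n)_+\|_{2^*}^{2^*-2}\|(v_n)_-\|_{2^*}$, all of which tend to zero.
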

\begin{proof}
Let $\{(u_n,v_n)\}$ be a $(PS)_{c}$-sequence for $\J_{+}$. There exists $C>0$ such that
\[
C (\|\nabla u_n\|_2^2 + \|\nabla v_n\|_2^2)
\leq
\J_{+}(u_n,v_n)-\frac{1}{2^*}\langle\J'_{+}(u_n,v_n),(u_n,v_n)\rangle
\leq
c+1+\sqrt{\|\nabla u_n\|_2^2 + \|\nabla v_n\|_2^2}
\]
and so $\{(u_n,v_n)\}$ is bounded. Moreover
\begin{align*}
o(1)
& =
\langle \J'_{+}(u_n,v_n), ((u_n)_{-},(v_n)_{-})\rangle \\
& =
\| \nabla (u_n)_{-} \|_2^2 - \lambda \| (u_n)_{-} \|_2^2
+\frac{1}{2^*-1} \| \nabla (v_n)_{-} \|_2^2
-\frac{1}{2^*-1} \int_{\Omega}(u_n)_+^{2^*-1} (v_n)_{-}
\\
& \geq
C(\|\nabla (u_n)_{-}\|_2^2 + \|\nabla (v_n)_{-}\|_2^2),
\end{align*}
and then $((u_n)_{-},(v_n)_{-})\to (0,0)$ in $H^1_0(\Omega)\times H^1_0(\Omega)$ and
\[
\int_{\Omega}(u_n)_+^{2^*-1} (v_n)_{-}\to 0.
\]
Thus 
\begin{eqnarray*}
\J_{+}(u_n,v_n)-\J_{+}((u_n)_{+},(v_n)_{+})
&=&
\frac{1}{2}
(\|\nabla (u_n)_{-}\|_2^2-\lambda \| (u_n)_{-}\|_2^2)+
\frac{1}{2(2^*-1)}\|\nabla (v_n)_{-}\|_2^2\\
&&-\frac{1}{2^*-1}\int_{\Omega}(u_n)_+^{2^*-1} (v_n)_{-}\\
&\to& 0.
\end{eqnarray*}
Finally, since $\{(u_n)_{+}\}$ is bounded and $((u_n)_{-},(v_n)_{-})\to (0,0)$ in $H^1_0(\Omega)$, then for every $(\xi,\eta)\in H^1_0(\Omega)\times H^1_0(\Omega)$ we have

\begin{align*}
|\langle \J_{+}'(u_n,v_n)-&\J_{+}'((u_n)_{+},(v_n)_{+}),(\xi,\eta)\rangle|\\
& =
\left|\int_{\Omega}\nabla (u_n)_{-} \nabla \xi
-\lambda\int_{\Omega} (u_n)_{-}\xi
+\frac{1}{2^*-1}\int_{\Omega}\nabla (v_n)_{-} \nabla \eta-\int_{\Omega}(u_n)_+^{2^*-2} (v_n)_{-}\xi\right|\\
& \leq
C(\|\nabla (u_n)_{-}\|_2\|\nabla \xi \|_2+\|\nabla (v_n)_{-}\|_2\|\nabla \eta \|_2 +\|\nabla (u_n)_{+}\|_2^{2^*-2}\|\nabla (v_n)_{-}\|_2\|\nabla \xi\|_2)\\
& \leq
C(\|\nabla (u_n)_{-}\|_2+(1+\|\nabla (u_n)_{+}\|_2^{2^*-2})\|\nabla (v_n)_{-}\|_2)(\|\nabla \xi\|_2^2+\|\nabla \eta\|_2^2)^{\frac{1}{2}}
\end{align*}
and then
$$\|\J_{+}'(u_n,v_n)-\J_{+}'((u_n)_{+},(v_n)_{+})\|\to 0.$$
\end{proof}

\begin{proof}[Proof of Theorem \ref{MainTh}]
As in the proof of Theorem \ref{MainProp} we can show that the functional $\J_{+}$ satisfies the geometrical assumptions of the Mountain Pass Theorem. 
Then there exists a $(PS)_{\mathcal{B}}$-sequence $\{(u_n,v_n)\}\in H^1_0(\Omega)\times H^1_0(\Omega)$ for $\J_{+}$ at level $\mathcal{B}$
In view of Lemma \ref{LemPSuv_plus} we may assume that $u_n=(u_n)_{+}\hbox{ and }v_n=(v_n)_{+}$ and $\{(u_n,v_n)\}$ is bounded.
Note that $\J(u_n,v_n)=\J_{+}(u_n,v_n)$ and thus we can conclude following the arguments given in proof of Theorem \ref{MainProp}, getting a ground state $(u,v)$ of $\J$ such that $u,v\geq 0$. 
Finally the Strong Maximum Principle (see \cite[Theorem 8.19]{GT}) implies that $u,v>0$.
\end{proof}

\section{Nonexistence result}\label{SectionNonexisctence}

\begin{proof}[Proof of (\ref{131}) of Theorem \ref{ThNonexistence}]
Let $(u,v)\in H^1_0(\Omega)\times H^1_0(\Omega)$ be a nontrivial solution to \eqref{eq} for $\lambda\leq 0$ and $\mu>\tilde{\mu}_N$.
In system \eqref{eq} multiply the fist equation by $v$, the second equation by $u$ and this leads to 
\begin{equation}\label{eqNon}
\int_{\Omega} u(u^{2^*-1} -u^{2^*-3}v^2 +\mu v^{2^*-1}-\lambda v)=0
\end{equation}
Considering the function $f_N$ in \eqref{fN}, we get that $u^{2^*-1} -u^{2^*-3}v^2 +\mu v^{2^*-1}>0$ in $\Omega$
for $\mu>\tilde{\mu}_N$ and this is in a contradiction with \eqref{eqNon}.
\end{proof}

\begin{proof}[Proof of (\ref{132}) of Theorem \ref{ThNonexistence}]
Suppose that $\lambda \geq \lambda_1 (\Omega)$ and $\mu\in\R$. We proceed  similarly as in \cite[Remark 1.1]{BN} arguing only on the first equation of \eqref{eq}. Let $(u,v)\in H^1_0(\Omega)\times H^1_0(\Omega)$ be a nontrivial solution to \eqref{eq} and $\varphi_1$ the eigenfunction of $-\Delta$ with Dirichlet boundary conditions corresponding to $\lambda_1 (\Omega)$. Multiplying the first equation of \eqref{eq} by $\varphi_1$ we have
\[
-\int_{\Omega} \Delta u \varphi_1 = \lambda \int_{\Omega} u \varphi_1 + \int_{\Omega} u^{2^*-2} v \varphi_1.
\]
On the other hand
\[
-\int_{\Omega} \Delta u \varphi_1 = -\int_{\Omega}  u \Delta \varphi_1 = \lambda_1 (\Omega) \int_{\Omega} u \varphi_1
\]
and so if $\lambda \geq \lambda_1 (\Omega)$ we reach a contradiction.
\end{proof}

\begin{proof}[Proof of (\ref{133}) of Theorem \ref{ThNonexistence}]
Here we adopt Poho\u{z}aev type arguments (see e.g. \cite{K} or \cite[Appendix B]{Willem} ).
Let $\Omega \subset \RN$ be a star shaped domain and $(u,v)\in H^2(\bar{\Omega})\times H^2(\bar{\Omega})$ be nontrivial solution of \eqref{eq}. If we multiply the first equation of \eqref{eq} by $x\cdot\n u$ and the second one by $x\cdot\n v$ we have that
\begin{align*}
0=&(\Delta u + \lambda u + u^{2^*-2} v)(x\cdot\n u)\\
 =&\operatorname{div}\left[(\n u)(x\cdot\n u)\right] - |\n u|^2
   - x\cdot \n \left(\frac{|\n u|^2}{2}\right) + \frac{\lambda}{2} \left[\operatorname{div}( x u^2) - N u^2\right]\\
  &\quad + \frac{1}{2^*-1} \left[ \operatorname{div} (x u^{2^*-1} v ) - N u^{2^*-1} v - u^{2^*-1} (x\cdot\n v)\right]\\
 =&\operatorname{div}\left[(\n u)(x\cdot\n u) - x \frac{|\n u|^2}{2} + \frac{\lambda}{2} x u^2
   +\frac{1}{2^*-1} x u^{2^*-1} v \right] + \frac{N-2}{2} |\n u|^2 - \frac{N\lambda}{2}  u^2\\
  &\quad - \frac{N}{2^*-1} u^{2^*-1} v - \frac{1}{2^*-1} u^{2^*-1} (x\cdot\n v)
\end{align*}
and
\begin{align*}
0=&(\Delta v + \mu v^{2^*-2}v + u^{2^*-1})(x\cdot\n v)\\
 =&\operatorname{div}\left[(\n v)(x\cdot\n v)\right] - |\n v|^2
   - x\cdot \n \left(\frac{|\n v|^2}{2}\right) + \frac{\mu}{2^*}\left[\operatorname{div} (x v^{2^*}) - N v^{2^*} \right] + u^{2^*-1} (x\cdot\n v)\\
 =&\operatorname{div}\left[(\n v)(x\cdot\n v) - x \frac{|\n v|^2}{2} + \frac{\mu}{2^*} x v^{2^*} \right] + \frac{N-2}{2} |\n v|^2 - \frac{N\mu}{2^*} v^{2^*} + u^{2^*-1} (x\cdot\n v).
\end{align*}
Integrating on $\Omega$ and using the boundary conditions on $u$ and $v$ we obtain
\begin{equation}\label{eq:Poho1}
0=\frac{1}{2} \int_{\partial\Omega} \left| \frac{\partial u}{\partial {\bf n}}\right|^2 x\cdot {\bf n}+\frac{N-2}{2} \|\n u \|_2^2 - \frac{N\lambda}{2}  \|u\|_2^2 - \frac{N}{2^*-1} \iO u^{2^*-1} v -\frac{1}{2^*-1} \iO u^{2^*-1} (x\cdot\n v)
\end{equation}
and
\begin{equation}\label{eq:Poho2}
0=\frac{1}{2} \int_{\partial\Omega} \left| \frac{\partial v}{\partial {\bf n}}\right|^2 x\cdot {\bf n}+\frac{N-2}{2} \|\n v\|_2^2 - \frac{N\mu}{2^*} \|v\|_{2^*}^{2^*} +  \iO u^{2^*-1} (x\cdot\n v),
\end{equation}
where ${\bf n}$ is the unit exterior normal to $\partial\Omega$.\\
Moreover, multiplying the equations of \eqref{eq} respectively by $u$ and by $v$ we get
\begin{equation}\label{eq:Ne1}
\|\n u \|_2^2= \lambda  \| u \|_2^2 +  \iO u^{2^*-1} v
\end{equation}
and
\begin{equation}\label{eq:Ne2}
\|\n v\|_2^2 = \mu \|v\|_{2^*}^{2^*} +  \iO u^{2^*-1} v.
\end{equation}
Hence, combining \eqref{eq:Poho1}, \eqref{eq:Poho2}, \eqref{eq:Ne1} and \eqref{eq:Ne2}, we have
\begin{equation}
\label{eq:finPoho}
-\lambda \| u \|_2^2 + \frac{1}{2} \int_{\partial\Omega} \left| \frac{\partial u}{\partial {\bf n}}\right|^2 x\cdot {\bf n}  + \frac{1}{2(2^*-1)}  \int_{\partial\Omega} \left| \frac{\partial v}{\partial {\bf n}}\right|^2 x\cdot {\bf n} = 0
\end{equation}
Then, if $\lambda<0$ we get a contradiction.\\
If $\lambda=0$, from \eqref{eq:finPoho} we have
\[
\frac{\partial u}{\partial {\bf n}}= \frac{\partial v}{\partial {\bf n}}=0
\hbox{ on } \partial\Omega
\]
and so, using the positivity of $u$ and $v$ and the first equation of \eqref{eq} we get a contradiction.
\end{proof}

\subsection*{Acknowledgements} We would like to thank Giusi Vaira for her valuable comments concerning the problem in case $N\geq 6$.


\begin{thebibliography}{99}

\bibitem{AA} N. Akhmediev, A. Ankiewicz, {\em Partially coherent solitons on a finite background}, Phys. Rev. Lett. {\bf 82} (1999), 2661--2665 . 



\bibitem{ACR}  A. Ambrosetti, G. Cerami, D. Ruiz, {\em Solitons of linearly coupled systems of semilinear non-autonomous equations on $\mathbb{R}^n$}, J. Funct. Anal. {\bf 254} (2008),  2816--2845. 

\bibitem{AC} A. Ambrosetti, E. Colorado, {\em Standing waves of some coupled nonlinear Schr\"odinger equations}, J. Lond. Math. Soc. (2) {\bf 75} (2007), 67--82.

\bibitem{Aub} T. Aubin, {\em Probl\`emes isop\'erim\'etriques et espaces de Sobolev}, J. Differential Geom. {\bf 11} (1976), 573--598. 

\bibitem{AD} A. Azzollini, P. d'Avenia, {\em On a system involving a critically growing nonlinearity}, J. Math. Anal. Appl. {\bf 387} (2012), 433--438.


%


\bibitem{BahriCoron} A. Bahri, J.-M. Coron, {\em On a nonlinear elliptic equation involving the critical Sobolev exponent: the effect of the topology of the domain}, Comm. Pure Appl. Math. {\bf 41} (1988), 253--294. 

\bibitem{BDW} T. Bartsch, N. Dancer, Z.-Q. Wang, {\em A Liouville theorem, a-priori bounds, and bifurcating branches of positive solutions for a nonlinear elliptic system}, Calc. Var. Partial Differential Equations {\bf 37} (2010), 345--361.

\bibitem{BWW} T. Bartsch, Z.-Q. Wang, J. Wei, {Bound states for a coupled Schr\"odinger system}, J. Fixed Point Theory Appl. {\bf 2} (2007), 353--367.

%


\bibitem{BL} H. Brezis, E.H. Lieb, {\em Minimum action solutions of some vector field equations}, Comm. Math. Phys. {\bf 96} (1984), 97--113.

\bibitem{BN} H. Brezis, L. Niremberg, {\em Positive solutions of nonlinear elliptic equations involving critical Sobolev exponents}, Comm. Pure Appl. Math. {\bf 36} (1983), 437--477.


\bibitem{CZ}  Z. Chen,  W. Zou, {\em Positive least energy solutions and phase separation for coupled Schr\"odinger equations with critical exponent}, Arch. Ration. Mech. Anal. {\bf 205} (2012), 515--551.

\bibitem{CZ2} Z. Chen,  W. Zou, {\em Ground states for a system of Schr\"odinger equations with critical exponent}, J. Funct. Anal. {\bf 262} (2012), 3091--3107.


\bibitem{CZ3} Z. Chen,  W. Zou, {\em An optimal constant for the existence of least energy solutions of a coupled Schr\"odinger system}, Calc. Var. Partial Differential Equations {\bf 48} (2013), 695--711.

\bibitem{CR} M. G. Crandall, P. H. Rabinowitz, {\em Some continuation and variational methods for positive solutions of nonlinear elliptic eigenvalue problems}, Arch. Rational Mech. Anal. {\bf 58}, (1975), 207--218.


%

\bibitem{DWW} E.N. Dancer, J. Wei, T. Weth, {\em A priori bounds versus multiple existence of positive solutions for a nonlinear Schr\"odinger system}, Ann. Inst. H. Poincar\'e Anal. Non Lin\'eaire {\bf 27} (2010), 953--969.

\bibitem{EGBB} B.D. Esry, C.H. Greene, J.P. Burke, J.L. Bohn, {\em Hartree-Fock theory for double condensates}, Phys. Rev. Lett. {\bf 78} (1997), 3594--3597.

\bibitem{GT} D. Gilbarg, N.S. Trudinger, {\em Elliptic partial differential equations of second order}, Springer-Verlag, Berlin, 2001.


\bibitem{IT} N. Ikoma, K. Tanaka, {\em A local mountain pass type result for a system of nonlinear Schr\"odinger equations},  Calc. Var. Partial Differential Equations {\bf 40} (2011), 449–480.

\bibitem{K} O. Kavian, {\em Introduction \`a la Th\'eorie des Points Critiques et applications aux probl\'emes elliptiques}, Math. Appl., vol. 13, Springer-Verlag, Paris, 1993.

\bibitem{LW} T.-C. Lin, J. Wei, {\em Spikes in two-component systems of nonlinear Schr\"odinger equations with trapping potentials}, J. Differential Equations {\bf 229} (2006), 538--569.

\bibitem{MMP} L. A. Maia, E. Montefusco, B. Pellacci,  {\em Positive solutions for a weakly coupled nonlinear Schr\"odinger system}, J. Differential Equations {\bf 229} (2006), 743--767.

\bibitem{M} F. Merle, {\em Sur la non-existence de solutions positives d'\'equations elliptiques surlin\'eaires}, C. R. Acad. Sci. Paris S\'er. I Math. {\bf 306} (1988), 313--316.

\bibitem{MPS} E. Montefusco, B. Pellacci, M. Squassina, {\em Semiclassical states for weakly coupled nonlinear Schr\"odinger systems}, J. Eur. Math. Soc. {\bf 10} (2008), 47--71.

\bibitem{NTTV} B. Noris, H. Tavares, S. Terracini, G. Verzini, {\em Uniform H\"older bounds for nonlinear Schr\"odinger systems with strong competition}, Comm. Pure Appl. Math. {\bf 63} (2010),  267--302.


\bibitem{P} A. Pomponio, {\em Coupled nonlinear Schr\"odinger systems with potentials}, J. Differential Equations {\bf 227} (2006), 258--281.

\bibitem{PS}  A. Pomponio, S. Secchi, {\em A note on coupled nonlinear Schr\"odinger systems under the effect of general nonlinearities}, Commun. Pure Appl. Anal. {\bf 9} (2010),  741--750.


\bibitem{Sirakov} B. Sirakov, {\em Least-energy solitary waves for a system of nonlinear Schr\"odinger equations in $\R^n$}, Comm. in Math. Phys., {\bf 271} (2007), 199--221.

\bibitem{Tal} G. Talenti, {\em Best constant in Sobolev inequality}, Ann. Mat. Pura Appl. (4) {\bf 110} (1976), 353--372.

\bibitem{Tarantello} G. Tarantello, {\em On nonhomogeneous elliptic equations involving critical Sobolev exponent}, Ann. Inst. H. Poincar\'e Anal. Non Lin\'eaire {\bf 9} (1992), 281--304.

\bibitem{WW}  J. Wei, T. Weth, {\em Radial solutions and phase separation in a system of two coupled Schr\"odinger equations}, Arch. Ration. Mech. Anal. {\bf 190} (2008), 83--106.

\bibitem{Willem} M. Willem, {\em Minimax Theorems}, Birkh\"auser Verlag 1996.

\bibitem{Z}  X.M. Zheng,  {\em Un r\'esultat de non-existence de solution positive pour une \'equation elliptique}, Ann. Inst. H. Poincar\'e Anal. Non Lin\'eaire {\bf 7} (1990), 91--96. 

\end{thebibliography}
\end{document}